\documentclass{amsart}

\usepackage{amsmath}
\usepackage{amssymb}
\usepackage{amsthm}
\usepackage{mathabx}
\usepackage{pgfplots}
\usepackage{tikz}
\usetikzlibrary{trees}
\usetikzlibrary{arrows}
\usepackage{hyperref}
\usepackage[T2A]{fontenc}
\usepackage[utf8]{inputenc}
\usepackage{bbm}
\usepackage{datetime}
\DeclareMathAlphabet{\mathbbold}{U}{BOONDOX-ds}{m}{n}

\DeclareMathOperator\supp{supp}
\DeclareMathOperator\Fol{Føl}
\DeclareMathOperator\Id{Id}

\newfont{\bbf}{msbm10 scaled\magstep1}

\newcounter{glob}[section]
\renewcommand\theglob{%
	\ifnum\arabic{section}=0\else\arabic{section}.\fi %
	\arabic{glob}}

\theoremstyle{plain}
\newtheorem{thm}[glob]{Theorem}
\newtheorem{lemma}[glob]{Lemma}
\newtheorem{cor}[glob]{Corollary}
\newtheorem{prop}[glob]{Proposition}
\newtheorem*{prop*}{Proposition}

\theoremstyle{definition}
\newtheorem{defi}[glob]{Definition}
\newtheorem{ex}[glob]{Example}

\theoremstyle{remark}
\newtheorem{remark}[glob]{Remark}
\newtheorem*{remark*}{Remark}

\def\Z{\mathbb{Z}}

\def\R{\mathbb{R}}

\def\N{\mathbb{N}}

\pgfplotsset{width=8cm}

\newdateformat{UKvardate}{%
	\THEDAY\ \monthname[\THEMONTH], \THEYEAR}
\UKvardate

\pagestyle{plain}
\title{\bf Exact descriptions of Følner functions and sets on wreath products and Baumslag-Solitar groups}
\newcommand*{\theauthor}{Bogdan Stankov}
\author{\theauthor}
\address{Institut Camille Jordan\\Universit\'e Claude Bernard Lyon 1\\43 boulevard du 11 novembre 1918\\F-69622 Villeurbanne Cedex}
\email{bogdan.zl.stankov@gmail.com}
\date{\today}

\newcommand*{\thekeywords}{F{\o}lner function, F{\o}lner sets, lamplighter group, wreath products, Baumslag-Solitar groups, Coulhon and Saloff-Coste inequality, growth function}

\thanks{
I would like to thank the D\'epartement de math\'ematiques et applications laboratory at \'Ecole normale sup\'erieure de Paris, where I wrote the first version of this paper.
My work there was supported by the ERC grant GroIsRan.
I would also like to thank the Institut Camille Jordan at Universit\'e Claude Bernard Lyon 1.
My work there is supported by LabEx MILYON.
}

\keywords{\thekeywords}

\makeatletter
\hypersetup{
pdfauthor=\theauthor,
pdftitle=\@title,
pdfsubject=\@title,
pdfkeywords=\thekeywords,
pdflang={en-US}
}
\makeatother

\begin{document}
\maketitle
 
\begin{abstract}
We calculate the exact values of the Følner function $\Fol$ of the lamplighter group $\Z\wr\Z/2\Z$ for the standard generating set.
More generally, for any finite group $D$ and $n\geq|D|$, we obtain the exact value of $\Fol(n)$ on the wreath product $\Z\wr D$, for a generating set induced by a generator on $\Z$ and the entire group being taken as generators for $D$.
We also describe the Følner sets that give rise to it.
Følner functions encode the isoperimetric properties of amenable groups and have previously been studied up to asymptotic equivalence (that is to say, independently of the choice of finite generating set).
What is more, we prove an isoperimetric result concerning the edge boundary on the Baumslag-Solitar group $BS(1,2)$ for the standard generating set.
\end{abstract}

\section{Introduction}
\thispagestyle{empty}

One equivalent characterization of the amenability of an infinite group $G$, called the \textit{F{\o}lner condition}, is that the isoperimetric constant (also known as Cheeger constant) of its Cayley graph should be $0$ (for some finite generating set $S$).
That constant is defined as the infimum of $\frac{|\partial F|}{|F|}$ over all finite sets $F\subset G$ with $|F|\leq\frac{1}{2}|G|$.
As the quotient cannot reach $0$, amenability of infinite groups is therefore characterized by the existence of a sequence of sets $F_n$ such that $\frac{|\partial  F_n|}{|F_n|}$ converges towards $0$, also known as a \textit{F{\o}lner sequence}.
One natural direction for studying the possible F{\o}lner sequences on a given group is to ask how small the sets can be.
We consider the F{\o}lner function.
It has classically been defined using the inner boundary:
\begin{equation}\label{defdin}
	\partial_{in}F=\left\{g\in F:\exists s\in S\bigcup S^{-1}:gs\notin F\right\}.
\end{equation}
\begin{defi}\label{foldef}
	The \textit{F{\o}lner function} $\Fol$ (or $\Fol_S$; or $\Fol_{G,S}$) of a group $G$ with a given finite generating set $S$ is defined on $\N$ by
	$$\Fol(n)=\min\left(|F|:F\subset G,\frac{|\partial_{in}F|}{|F|}\leq\frac{1}{n}\right).$$
\end{defi}

Remark that $\Fol(1)=1$ and that the values of the function are finite if and only if $G$ is amenable.
Its values clearly depend on the choice of a generating set, but the functions arising from different generating sets (and more generally, functions arising from quasi-isometric spaces) are asymptotically equivalent.
Two functions are asymptotically equivalent if there are constants $A$ and $B$ such that $f(x/A)/B<g(X)<f(xA)B$.
Various articles have classified F{\o}lner functions up to asymptotic equivalence, see for example~\cite{BrieusselZheng,Erschler2003,Erschler2017,Gromov2009,Osserman1978,Pittet1995,Pittet2003,saloffcostezheng}.
In this paper, we will determine its exact values for one of the most basic examples of groups with exponential growth (we state our results in Section~\ref{statesect}).

The classical isoperimetric theorem states that among domains of given volume in $\R^n$, the minimal surface area is obtained on a ball (see survey by Osserman~\cite[Section~2]{Osserman1978}).
The fact that if a minimum exists, it is realized only on the ball is obtained (in $\R^2$) by Steiner in the XIX\textsuperscript{th} century, using what is now called Steiner symmetrization (see Hehl~\cite{hehl13}, Hopf~\cite{hopf40}, Froehlich~\cite{froehlich09}).
The existence of a minimum is obtained, in $\R^3$, by Schwarz~\cite{Schwarz1884}.
As $\Z^n$ is quasi-isometric to $\R^n$, this is also a first isoperimetric result for discrete groups.
Varopoulos~\cite{Varopoulis1985} shows more generally an isoperimetric inequality for direct products.
Pansu~\cite{pansu83} (see also~\cite{pansu82}) obtains one for the Heisenberg group $H_3$.
One central result is the Coulhon and Saloff-Coste inequality~\cite{Coulhon1993}:

\begin{thm}[Coulhon and Saloff-Coste inequality]\label{csc}
Consider a group $G$ generated by a finite set $S$ and let $\phi(\lambda)=\min(n:V(n)>\lambda)$.
Then for all finite sets $F$:

$$\frac{|\partial_{in}F|}{|F|}\geq\frac{1}{8|S|\phi(2|F|)}.$$
\end{thm}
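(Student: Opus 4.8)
The plan is to prove the inequality by a direct combinatorial double-counting argument that tracks geodesics as they exit $F$. This route in fact yields the stronger bound with $8|S|$ replaced by $2$, from which the stated inequality follows a fortiori since $8|S|\geq 2$ for any generating set.

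The key lemma I would establish first is that for every group element $x$,
$$|F\setminus Fx|\leq |x|\cdot|\partial_{in}F|,$$
where $|x|$ denotes the word length with respect to $S\cup S^{-1}$. To prove it, write $x^{-1}=t_1t_2\cdots t_k$ with $t_i\in S\cup S^{-1}$ and $k=|x|$, and take any $f\in F\setminus Fx$, i.e.\ $f\in F$ but $fx^{-1}\notin F$. Following the path $f,\,ft_1,\,ft_1t_2,\,\dots,\,ft_1\cdots t_k=fx^{-1}$ from inside $F$ to outside, let $j$ be the first index at which it leaves $F$; then $p(f):=ft_1\cdots t_{j-1}\in\partial_{in}F$. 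Since $f=p(f)\,(t_1\cdots t_{j-1})^{-1}$ is recovered from the pair $(p(f),j)$, the assignment $f\mapsto(p(f),j)$ is injective into $\partial_{in}F\times\{1,\dots,k\}$, which gives the bound.

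Next I would set $n=\phi(2|F|)$, so that by definition of $\phi$ the ball $B=B(e,n)$ satisfies $|B|=V(n)>2|F|$, while every $x\in B$ has $|x|\leq n$. I would then double-count $\sum_{x\in B}|F\cap Fx|$ two ways: on one hand it equals the number of pairs $(f,f')\in F\times F$ with $d(f,f')\leq n$, hence is at most $|F|^2$; on the other hand, the lemma gives $|F\cap Fx|=|F|-|F\setminus Fx|\geq|F|-n|\partial_{in}F|$ for each $x\in B$, so the sum is at least $V(n)\bigl(|F|-n|\partial_{in}F|\bigr)$. Comparing the two estimates yields $V(n)\bigl(|F|-n|\partial_{in}F|\bigr)\leq|F|^2$.

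Finally I would conclude by contradiction: if $n|\partial_{in}F|\leq|F|/2$ then $|F|-n|\partial_{in}F|\geq|F|/2$, and the previous inequality forces $V(n)\leq 2|F|$, contradicting $V(n)>2|F|$. Hence $n|\partial_{in}F|>|F|/2$, that is $\frac{|\partial_{in}F|}{|F|}>\frac{1}{2\phi(2|F|)}\geq\frac{1}{8|S|\phi(2|F|)}$. The main obstacle is setting up the exit-point lemma cleanly, in particular verifying that the exit map is genuinely injective and that $|F\cap Fx|$ correctly decomposes; once that is in hand, the remainder is elementary double counting together with the defining property of $\phi$.
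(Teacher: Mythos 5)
Your proof is correct, but there is nothing in the paper to compare it against: the paper states Theorem~\ref{csc} as quoted background, citing Coulhon and Saloff-Coste~\cite{Coulhon1993}, and never proves it. What you have written is essentially the standard combinatorial argument that appears in Pete~\cite[Theorem~5.11]{pete} and Santos Correia~\cite{csc2020} --- precisely the references the paper invokes for the sharpened constant. Your key transport lemma $|F\setminus Fx|\leq|x|\,|\partial_{in}F|$ is sound: the word $t_1\cdots t_k$ representing $x^{-1}$ is fixed once and for all, so $f=p(f)(t_1\cdots t_{j-1})^{-1}$ is indeed recovered from the pair $(p(f),j)$, giving injectivity; and the double count of $\sum_{x\in B}|F\cap Fx|$ (at most $|F|^2$ pairs at distance $\leq n$, at least $V(n)\bigl(|F|-n|\partial_{in}F|\bigr)$ by the lemma) is carried out correctly, with the exit points landing in $\partial_{in}F$ exactly as defined in~(\ref{defdin}). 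As you note, this establishes the strictly stronger inequality $\frac{|\partial_{in}F|}{|F|}\geq\frac{1}{2\phi(2|F|)}$, which is Equation~(\ref{csceq}) of the paper, and the stated bound with $8|S|$ follows a fortiori. Two minor caveats worth recording: $F$ must be assumed non-empty, and your contradiction step presupposes $n=\phi(2|F|)$ is finite; for an infinite group this is automatic since $V(n)\to\infty$, while for a finite group with $2|F|\geq|G|$ one takes $\phi(2|F|)=+\infty$ by convention and the inequality is vacuous, so nothing is lost.
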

The multiplicative constants can improved (see Gábor Pete~\cite[Theorem~5.11]{pete}, Bruno Luiz Santos Correia~\cite{csc2020}):
\begin{equation}\label{csceq}
\frac{|\partial_{in}F|}{|F|}\geq\frac{1}{2\phi(2|F|)}.
\end{equation}
The result of Santos Correia is also announced for finite groups for $|F|\leq\frac{1}{2}|G|$.
Santos Correia and Troyanov~\cite{csc2021} show:
\begin{equation}\label{csceqlam}
\frac{|\partial_{in}F|}{|F|}\geq\left(1-\frac{1}{\lambda}\right)\frac{1}{\phi(\lambda|F|)}
\end{equation}
for $1<\lambda\leq\frac{|G|}{|S|}$ (in particular, for arbitrarily large $\lambda$ if $G$ is infinite).
The result is replicated in~\cite{csccollab} without an upper bound on $\lambda$.

The Coulhon and Saloff-Coste inequality (Theorem~\ref{csc}) implies in particular that for a group with exponential growth, the F{\o}lner function must also grow at least exponentially.
It can be obtained then that it is exactly exponential if one can describe F{\o}lner sets with exponential growth.
One simple example is the lamplighter group $\Z\wr\Z/2\Z$ with the standard generating set (\ref{defst}).
Similarly, it is known that the F{\o}lner functions of groups with polynomial growth are polynomial (see for example~\cite[Section~I.4.C]{woess2000random}).
Another inequality on group isoperimetry is given by Żuk~\cite{Zuk2000}.
Vershik~\cite{vershik1973countable} asks if F{\o}lner function can be super-exponential, initiating the study of F{\o}lner functions.
He suggests studying the wreath product $\Z\wr\Z$ as a possible example.
Pittet~\cite{Pittet1995} shows that the F{\o}lner functions of polycyclic groups are at most exponential (and are therefore exponential for polycyclic groups with exponential growth).
This is true more generally for solvable groups of finite Prüfer rank, see~\cite{Pittet2003} and~\cite{Kropholler2020}.
The first example of a group with super-exponential F{\o}lner function is obtained by Pittet and Saloff-Coste~\cite{PittetSaloffCoste} for $\Z^d\wr\Z/2\Z$ with $d\geq3$.
Later the F{\o}lner functions of wreath products with certain regularity conditions are described by Erschler~\cite{Erschler2003} up to asymptotic equivalence.
Specifically, say that a function $f$ verifies property $(*)$ if for all $C>0$ there is $k>0$ such that $f(kn)>Cf(n)$.
The result of \cite{Erschler2003} than states that if the F{\o}lner function of a group $A$ verifies property $(*)$ (for some fixed generating set), then for any non-trivial group $B$, the F{\o}lner function of $A\wr B$ is $\Fol_{A\wr B}(n)=\Fol_B(n)^{\Fol_A(n)}$.

Other examples with know F{\o}lner functions have been presented by Gromov~\cite[Section~8.2,Remark~(b)]{Gromov2009} for all functions with sufficiently fast growing derivatives.
Saloff-Coste and Zheng~\cite{saloffcostezheng} provide upper and lower bounds for it on, among others, "bubble" groups and cyclic Neumann-Segal groups, and those two bounds are asymptotically equivalent under certain conditions.
Recently, Brieussel and Zheng~\cite{BrieusselZheng} show that for any function $g$ that can be written as the inverse function of $x/f(x)$ for some non-decreasing $f$ with $f(1)=1$ and $x/f(x)$ also non-decreasing, there is a group the F{\o}lner function of which is asymptotically equivalent to $\exp(g(n))$.
Erschler and Zheng~\cite{Erschler2017} obtain examples for a class of super-exponential functions under $\exp(n^2)$ with weaker regularity conditions.
Specifically, for any $d$ and any non-decreasing $\tau$ such that $\tau(n)\leq n^d$, there is a group $G$ and a constant $C$ such that
\begin{equation}\label{annatyani}
	Cn\exp(n+\tau(n))\geq\Fol_G(n)\geq\exp(\frac{1}{C}(n+\tau(n/C))).
\end{equation}
The left-hand side of this inequality is always asymptotically equivalent to $\exp(n+\tau(n))$, and it suffices therefore that the right-hand side be asymptotically equivalent to that function to have a description of the F{\o}lner function of $G$.
Notice in particular that if $\tau$ verifies condition $(*)$, this is verified.
Remark that the conditions we mentioned only consider functions at least as large as $\exp(n)$; it is an open question whether a F{\o}lner function can have intermediate growth (see Grigorchuk~\cite[Conjecture~5(ii)]{grigsurvey}).
A negative answer would imply the Growth Gap Conjecture~\cite[Conjecture~2]{grigsurvey}, which conjectures that the volume growth function must be either polynomial or at least as fast as $\exp(\sqrt{n})$.
Those conjectures also have weak versions, which are equivalent to each other (see discussion after Conjecture~6 in~\cite{grigsurvey}).

\section{Statement of results}\label{statesect}

In this paper, we obtain exact values of the F{\o}lner function for a generating set $S_D=\{t\}\bigcup\{\delta_i;i\in D\setminus\{Id_D\}\}$ (see~(\ref{defst})) on the wreath product $\Z\wr D$ (see Definition~\ref{deflamp}).
Remark that for the lamplighter group $\Z\wr\Z/2\Z$, this is the standard generating set.
Remark also that the group structure of $D$ is irrelevant for the study of F{\o}lner sets with regards to $S_D$.

\begin{thm}\label{thmmain}
For any finite group $D$ and any $n\geq|D|$, the value of the F{\o}lner function of the wreath product $\Z\wr D$ for the generating set $S_D=\{t\}\bigcup\{\delta_i;i\in D\setminus\{Id_D\}\}$ at $n$ is: 
$$\Fol_D(n)=2n|D|^{2(n-1)}.$$
\end{thm}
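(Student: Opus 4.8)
The plan is to prove matching upper and lower bounds and to read off the extremal sets along the way; throughout write $d=|D|$ and identify an element of $\Z\wr D$ with a pair $(f,k)$, where $f\colon\Z\to D$ is finitely supported and $k\in\Z$.

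\textbf{Upper bound.} First I would exhibit an explicit F{\o}lner set. Put
$$F^*=\{(f,k): 0\le k\le 2n-1,\ \supp f\subseteq\{1,\dots,2n-2\}\},$$
noting that since $n\ge|D|\ge 2$ the interval $\{1,\dots,2n-2\}$ is nonempty. Then $|F^*|=2n\,d^{\,2(n-1)}$. A direct check of the two kinds of generators — $t^{\pm1}$ moves the cursor $k$, while $\delta_i$ alters the lamp at the current position $k$ — shows that $(f,k)\in\partial_{in}F^*$ exactly when $k\in\{0,2n-1\}$: at these two extreme positions both a step $t^{\pm1}$ leaving $[0,2n-1]$ and a switch $\delta_i$ planting a lamp outside $\{1,\dots,2n-2\}$ exit $F^*$, whereas at every interior position neither move does. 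Hence $|\partial_{in}F^*|=2d^{\,2(n-1)}$, the ratio is exactly $1/n$, and $\Fol_D(n)\le 2n\,d^{\,2(n-1)}$.

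\textbf{Lower bound, reduction to fibres.} For the converse I would slice a finite $F$ along the base $\Z$, writing $F_k=\{f:(f,k)\in F\}$, so $|F|=\sum_k|F_k|$. The inner boundary splits according to the generator type: a vertex lies in the \emph{$\delta$-boundary} iff its $D$-orbit in the current coordinate $k$ is not entirely contained in $F_k$, contributing $\sum_k\bigl(|F_k|-d\,L_k\bigr)$, where $L_k$ counts the complete $D$-orbits (``full lamp-lines'') inside $F_k$; a vertex lies in the \emph{$t$-boundary} iff $f\notin F_{k-1}$ or $f\notin F_{k+1}$. The target inequality is cleanest on a cylinder $A\times\{0,\dots,w-1\}$, where the two end fibres are entirely $t$-boundary while the interior fibres contribute only $\delta$-boundary, so
$$\frac{|\partial_{in}F|}{|F|}=\frac{2|A|+\sum_{k=1}^{w-2}\bigl(|A|-d\,L_k\bigr)}{w\,|A|}\ \ge\ \frac2w,$$
forcing $w\ge 2n$; substituting back turns $|\partial_{in}F|/|F|\le1/n$ into $d\sum_k L_k\ge |A|\,w(n-1)/n$. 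Since a subset of a Hamming box that is complete in direction $k$ has at most $|A|/d$ full lines, this bound forces $A$ to be complete in almost every interior direction, i.e.\ a sub-box; the smallest such $A$ is the full box $D^{\{1,\dots,w-2\}}$, and minimising $w\,d^{\,w-2}$ over $w\ge 2n$ returns the value $2n\,d^{\,2(n-1)}$ at $w=2n$.

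\textbf{Lower bound, symmetrization.} The real work is to reduce an arbitrary $F$ to a cylinder over a sub-box without increasing the boundary-to-volume ratio, and here I would use compression/Steiner-type symmetrizations in two groups of directions: first in the lamp coordinates, replacing each fibre $F_k$ by a box-compressed set of the same size, which cannot increase its number of incomplete $D$-lines and hence the $\delta$-boundary; then in the base direction $\Z$, rearranging the fibres to be nested and supported on a contiguous interval, which cannot increase the $t$-boundary. I expect this to be the main obstacle: one must verify that the lamp-direction and base-direction compressions are simultaneously non-increasing on the \emph{union} defining $\partial_{in}F$ — in the extremiser the two boundary types overlap precisely at the end fibres, so the bookkeeping of that overlap is delicate — and one must control the integrality of $w$ and of the number of full directions. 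It is exactly in this final integer optimisation and extremal analysis that the hypothesis $n\ge|D|$ enters: for $n\ge|D|$ the full cylinder above is forced to be optimal, whereas for smaller $n$ corner effects permit marginally smaller sets. Running the same symmetrization to its rigid conclusion should also yield the advertised description of the extremal F{\o}lner sets.
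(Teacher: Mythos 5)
Your upper bound is correct, and your set $F^*$ is exactly the paper's extremal set (in the paper's notation, $F_{2n-2}\bigcup\partial_{out}F_{2n-2}$), so $\Fol_D(n)\leq 2n|D|^{2(n-1)}$ is established. The problem is the lower bound, and the gap you yourself flag there is genuine: the ``symmetrization'' step is not deferred bookkeeping, it is the whole theorem, and as proposed it runs into a concrete obstruction. The $\delta$-part of the inner boundary in fibre $F_k$ involves only the lamp coordinate $k$ (the lamp under the cursor), so $F$ does not sit inside a product graph to which box-compression applies: if you rearrange fibres along the base to make them nested and contiguous, a fibre moved from position $k$ to position $k'$ has its complete lines counted in direction $k'$ rather than $k$, so the base-direction rearrangement can increase the $\delta$-boundary; conversely, compressing each $F_k$ in the lamp coordinates changes the incidences $f\in F_{k\pm1}$ that govern the $t$-boundary. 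No standard compression lemma makes both steps simultaneously monotone, and you give no substitute. Moreover, even granting the reduction to cylinders $A\times\{0,\dots,w-1\}$, your final step --- ``complete in almost every interior direction, i.e.\ a sub-box; the smallest such $A$ is the full box'' --- is itself a nontrivial edge-isoperimetric claim: one must know that among subsets of the Hamming box of a given size, sub-boxes maximize the number of complete lines. For $d=2$ this is Harper's theorem (Theorem~\ref{thmharper}), and for general $d$ it requires proof; near-completeness in most directions does not formally force box structure.

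For comparison, the paper does not compress at all. It works with the outer boundary and transfers to the inner boundary by Lemma~\ref{equiv}; it then associates to $F$ a subgraph of the graph whose vertices are lamp configurations, each $(k,f)\in F$ contributing the $d-1$ edges from $f$ to configurations differing only at position $k$. Lemma~\ref{main} bounds $|\partial_{out}F|$ below by $2|V|-|L|$ (leaves $L$), removes leaves by an induction that needs the ratio hypothesis $\frac{|\partial_{out}F|}{|F|}\leq\frac{1}{d-1}$ --- this, not an ``integer optimisation corner effect,'' is where the restriction giving $n\geq|D|$ enters --- and Lemma~\ref{inversecubal} then proves the key combinatorial fact: a subgraph of the $d$-ary hypercube on $V$ vertices has at most $\frac{d-1}{2}V\log_d V$ edges, with equality only for sub-boxes (induction on $d$ and $V$, Harper's cubal-set theorem as the base case, and a Lagrange/KKT analysis for the inductive step). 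Your plan needs a statement of exactly this strength precisely where you assert the cylinder must be a full box; without it, and without a working replacement for the compression step, the lower bound (and hence the uniqueness statement) remains unproven.
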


We also describe the sets that give rise to this function.
Specifically, we obtain that the standard F{\o}lner sets $F_n=\{(k,f):k\in[\![1,n]\!],\supp(f)\subset[\![1,n]\!]\}$ are optimal (see Definition~\ref{optimal}) for the outer and edge boundary (see Section~\ref{prel1}).
We then show that by Lemma~\ref{equiv}, $F_n\bigcup\partial_{out}F_n=\{(k,f):k\in[\![0,n+1]\!],\supp(f)\subset[\![1,n]\!]\}$ is optimal for the inner boundary, from which Theorem~\ref{thmmain} follows:

\begin{thm}\label{thmlamp}
Consider the wreath product $\Z\wr D$ with the generating set $S_D=\{t\}\bigcup\{\delta_i;i\in D\setminus\{Id_D\}\}$.
Let $F_n=\{(k,f):k\in[\![1,n]\!],\supp(f)\subset[\![1,n]\!]\}$.
\begin{enumerate}
	\item For any $n\geq2(|D|-1)$ and any $F\subset\Z\wr D$ such that $|F|\leq|F_n|$, we have
	$$\frac{|\partial_{edge}F|}{|F|}\geq\frac{|\partial_{out}F|}{|F|}\geq\frac{|\partial_{out}F_n|}{|F_n|}=\frac{|\partial_{edge}F_n|}{|F_n|},$$
	and if $|F|<|F_n|$, the inequality $\frac{|\partial_{out}F|}{|F|}>\frac{|\partial_{out}F_n|}{|F_n|}$ is strict,
	\item From point $(1)$ it follows that for any $n\geq2(|D|-1)$ and any $F\subset\Z\wr D$ such that $|F|\leq|F_n\bigcup\partial_{out}F_n|$, we have
	$$\frac{|\partial_{in}F|}{|F|}\geq\frac{|\partial_{in}(F_n\bigcup\partial_{out}F_n)|}{|F_n\bigcup\partial_{out}F_n|},$$
	and if $|F|<|F_n\bigcup\partial_{out}F_n|$, the inequality is strict.
\end{enumerate}
Furthermore, the sets that give equality are unique up to translation.
\end{thm}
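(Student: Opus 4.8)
The plan is to prove part (1) first, since part (2) will follow from it through Lemma~\ref{equiv}, and within part (1) to reduce everything to a lower bound on the \emph{outer} boundary. The inequality $|\partial_{edge}F|\geq|\partial_{out}F|$ is free: each vertex of $\partial_{out}F$ is joined to $F$ by at least one edge, so assigning to it one such edge gives an injection into $\partial_{edge}F$. A direct count gives $|F_n|=n|D|^n$, and since a $\delta_i$-move at a position $k\in[\![1,n]\!]$ keeps the support inside $[\![1,n]\!]$, the only boundary of $F_n$ sits at the two ``walls'' $\{0\}$ and $\{n+1\}$, whence $|\partial_{out}F_n|=|\partial_{edge}F_n|=2|D|^n$ and the common ratio is $2/n$ (each wall vertex meeting $F_n$ in exactly one edge, which forces the claimed equality $|\partial_{out}F_n|=|\partial_{edge}F_n|$). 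Thus the whole of part (1) reduces to showing $|\partial_{out}F|\geq\frac{2}{n}|F|$ for every finite $F$ with $|F|\leq n|D|^n$, with strict inequality unless $F$ is a translate of $F_n$.

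For the lower bound I would organize $F$ by its horizontal fibres. For a configuration $f$ set $S_f=\{k:(k,f)\in F\}$ and let $N=|\{f:S_f\neq\emptyset\}|$. Since $\min S_f-1$ and $\max S_f+1$ lie outside $F$ but are $t^{\pm1}$-adjacent to $F$, each nonempty fibre contributes two distinct horizontal boundary vertices, giving the clean bound $|\partial_{out}F|\geq 2N$. If every fibre satisfied $|S_f|\leq n$ we would be done, because then $|F|=\sum_f|S_f|\leq nN$ and $|\partial_{out}F|\geq 2N\geq\frac{2}{n}|F|$. The whole difficulty is concentrated in configurations with $|S_f|>n$: such a long fibre cheaply adds volume while contributing only two horizontal boundary vertices, and it must be paid for by \emph{vertical} boundary. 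The extremal picture to keep in mind is that $F_n$ has fibres of length exactly $n$ and \emph{no} vertical boundary at all, so equality in $|\partial_{out}F|\geq 2N$ together with $|S_f|\equiv n$ is precisely the equality regime.

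The mechanism for controlling long fibres is a two-step Steiner-type symmetrization. First I would apply a \emph{horizontal compression}, replacing each $S_f$ by the left-aligned interval $[\![1,|S_f|]\!]$; this realizes the minimal horizontal boundary $2N$ exactly, and the point to verify is that it does not increase the outer boundary overall. Second I would apply a \emph{vertical compression} at each position, rearranging, for fixed values off $k$, the set of lamp values present at $k$ so that supports become inclusion-minimal (``downward closed''); this is the analogue of symmetrization along the complete graphs $K_{|D|}$ that the $\delta_i$ generators create at each site. Iterating these to a fixed point leaves a \emph{box} $\{(k,f):k\in[\![1,\ell]\!],\ \supp f\subseteq J\}$ for some interval $J$, on which both boundaries are computable in closed form: the diagonal box $J=[\![1,\ell]\!]$ has ratio $2/\ell$, while a box with $J=[\![1,j]\!]$ and $j<\ell$ pays an extra vertical boundary $(\ell-j)(|D|-1)|D|^{j}$, and one with $j>\ell$ only enlarges the volume. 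A finite optimization then shows that, within the volume budget $n|D|^n$, the ratio is minimized by the largest diagonal box $\ell=n$, giving exactly $2/n$; this is the step where the hypothesis $n\geq 2(|D|-1)$ enters, as it is exactly what makes trading interval length for frozen lamp sites unprofitable (the relevant marginal comparison has sign governed by $(|D|-1)(n-1)-2$).

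\emph{The main obstacle} I anticipate is precisely the coupling between the two directions: the horizontal and vertical compressions each move configurations between positions, so one must prove that neither operation increases $|\partial_{out}F|$ even though it may redistribute boundary between the horizontal and vertical types. Making this monotonicity rigorous for the \emph{vertex} (outer) boundary, rather than the additively decomposable edge boundary, is the technical heart, and it is also where the uniqueness statement is read off: tracking the equality cases of each compression step forces every fibre to have length $n$ and every site to be either fully free or fully off, which pins $F$ down to a translate of $F_n$ (strictly, any $F$ with $|F|<|F_n|$ cannot meet the diagonal optimum). Finally, for part (2) I would invoke Lemma~\ref{equiv} to transport outer-boundary optimality of $F_n$ to inner-boundary optimality of $F_n\bigcup\partial_{out}F_n$: since $\partial_{in}(F\bigcup\partial_{out}F)\subseteq\partial_{out}F$ in general and equality holds for $F_n$, the lemma converts the ratio $\frac{|\partial_{out}F_n|}{|F_n|}=\frac2n$ into $\frac{|\partial_{in}(F_n\bigcup\partial_{out}F_n)|}{|F_n\bigcup\partial_{out}F_n|}=\frac{2|D|^n}{(n+2)|D|^n}$, and Theorem~\ref{thmmain} then follows by setting the interval length so that $n+2=2m$.
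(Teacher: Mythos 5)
Your outer layer matches the paper: part (2) is deduced from part (1) via Lemma~\ref{equiv} exactly as the paper does, the values $|F_n|=n|D|^n$, $|\partial_{out}F_n|=|\partial_{edge}F_n|=2|D|^n$ are correct, and your bound $|\partial_{out}F|\geq 2N$ over nonempty fibres is essentially the paper's count of non-leaf vertices. But the heart of your part (1) — the two-step compression — is not just unproven; its first step is false. Horizontal compression (replacing each fibre $S_f$ by $[\![1,|S_f|]\!]$) can strictly \emph{increase} the outer boundary. In $\Z\wr\Z/2\Z$, write $\mathbf{0}$ for the zero configuration and $g$ for the configuration lit exactly at position $5$, and take
$$F=\{(5,\mathbf{0})\}\cup\{(k,g):1\leq k\leq 5\},\qquad |F|=6,$$
so $S_{\mathbf{0}}=\{5\}$ and $S_g=[\![1,5]\!]$. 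Since $(5,\mathbf{0})$ and $(5,g)$ are lamp-neighbours and both lie in $F$, a direct check gives $|\partial_{out}F|=8$. Your compression produces $F'=\{(1,\mathbf{0})\}\cup\{(k,g):1\leq k\leq 5\}$, and now $(1,\mathbf{0})$ and $(5,g)$ each acquire a lamp-neighbour outside $F'$ (namely the point over position $1$ with lamp $1$ lit, and $(5,\mathbf{0})$), while the horizontal boundary is unchanged: $|\partial_{out}F'|=10>8$. The conceptual reason is that $\Z\wr D$ is not a product: a lamp edge between the columns of two configurations $f$ and $g$ exists only at the single position where $f$ and $g$ differ, so fibres ``want'' to sit over those positions, and left-alignment destroys these matchings. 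This is precisely the coupling you flagged as ``the main obstacle,'' but it is not a verification to be supplied — the operation you propose does not have the required monotonicity.

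There is a second, independent gap: even granting boundary-monotone compressions, their fixed points are downward-closed-type sets, not boxes $\{(k,f):k\in[\![1,\ell]\!],\ \supp f\subseteq J\}$; identifying the extremal sets among compressed sets is the actual content of Harper-type theorems, so your ``finite optimization over boxes'' does not cover the cases that matter (and the equality/uniqueness analysis inherits both problems). The paper avoids rearrangement inside the group altogether: it projects $F$ to the associated graph $\Gamma$ on configurations $D^{(\Z)}$, proves $|\partial_{out}F|\geq 2|V(\widetilde{F})|-|L(\widetilde{F})|$ for the associated subgraph $\widetilde{F}$ with leaf set $L(\widetilde{F})$ (Lemma~\ref{main}), removes leaves by induction (Lemma~\ref{leaf}), and then proves a Harper-type edge-isoperimetric inequality in the product of complete graphs $\overline{\Gamma}$ — a subgraph on $V$ vertices has at most $\frac{d-1}{2}V\log_d V$ edges, with equality only for hypercubes (Lemma~\ref{inversecubal}), established via Harper's theorem for $d=2$ and an induction with a KKT/Lagrange-multiplier optimization for general $d$. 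In effect, the vertical isoperimetry you hoped to handle by symmetrization is exactly this edge-isoperimetric statement in configuration space; any repair of your argument would amount to reproving it, so the proposal as written does not constitute a proof.
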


We can substitute those values in the Coulhon and Saloff-Coste inequality (Theorem~\ref{csc}) in order to study the multiplicative constant.
As in~\cite{csccollab}, we define
\begin{defi}\label{constq}
For a group $G$ and a generating set $S$, denote
$$C_{G,S}=\sup\left\{c\geq0:\exists\alpha\geq0\mbox{ such that }\forall F\subset G,\frac{|\partial_{in}F|}{|F|}\geq c\frac{1}{\phi((1+\alpha)|F|)}\right\},$$
where $F$ is assumed to be finite and non-empty.
\end{defi}
The original inequality obtains that for all $G,S$, $C_{G,S}\geq\frac{1}{8|S|}$.
The results of~\cite[Theorem~5.11]{pete} and~\cite{csc2020} that we cited as Equation~\ref{csceq} give a lower bound of $\frac{1}{2}$.
Equation~\ref{csceqlam} from~\cite{csc2021} further implies that $C_{G,S}\geq1$ for all $G,S$.

In \cite{csccollab}, it is shown that for groups of exponential growth:
$$C_{G,S}=\frac{\liminf\frac{\ln\Fol(n)}{n}}{\lim\frac{\ln V(n)}{n}}.$$
\begin{prop}\label{const}
The wreath product $\Z\wr D$ verifies

$$C_{\Z\wr D,S_D}=\frac{\lim\frac{\ln\Fol_{\Z\wr D}(n)}{n}}{\lim\frac{\ln V_{\Z\wr D}(n)}{n}}=\frac{2\ln|D|}{\ln(\frac{1}{2}(1+\sqrt{4|D|-3}))}\geq\frac{2\ln2}{\ln(\frac{1}{2}(1+\sqrt{5}))}\approx2,88$$
for the generating set $S_D=\{t\}\bigcup\{\delta_i;i\in D\setminus\{Id_D\}\}$. 
\end{prop}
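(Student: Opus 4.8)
The plan is to evaluate the two limits appearing in the formula $C_{G,S}=\frac{\liminf\ln\Fol(n)/n}{\lim\ln V(n)/n}$ cited from \cite{csccollab}, which applies since $\Z\wr D$ has exponential growth whenever $|D|\geq2$. The numerator is immediate from Theorem~\ref{thmmain}: since $\Fol_D(n)=2n|D|^{2(n-1)}$ for $n\geq|D|$, we have $\ln\Fol_D(n)=\ln(2n)+2(n-1)\ln|D|$, so $\frac{\ln\Fol_D(n)}{n}\to 2\ln|D|$. In particular the $\liminf$ is a genuine limit equal to $2\ln|D|$, which also justifies writing $\lim$ in the statement. It then remains to show that the volume growth rate is $\lambda:=\frac12(1+\sqrt{4|D|-3})$, that is $\lim\frac{\ln V(n)}{n}=\ln\lambda$, and the whole identity follows by division.

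For the denominator I would first record the word-length formula for $S_D$. Writing an element as $(k,f)$ with $k\in\Z$ and $f$ finitely supported, each nonidentity lamp is set by a single generator $\delta_i$, so the lamp cost is exactly $|\supp(f)|$, while the remaining cost is the length of a minimal walk on $\Z$ starting at $0$, visiting every site of $\supp(f)$ and ending at $k$. Setting $l=-\min(\supp(f)\cup\{0,k\})$ and $r=\max(\supp(f)\cup\{0,k\})$, this minimal travel equals $2l+2r-|k|$, so $\ell(k,f)=|\supp(f)|+2l+2r-|k|$.

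The main computation is then to extract the exponential growth rate from the generating function $G(z)=\sum_{(k,f)}z^{\ell(k,f)}$. Because the travel cost splits as a left part $2l$, a right part $2r$, and a correction $-|k|$, the series factors into a left and a right contribution joined at the endpoint $k$; writing $q=|D|-1$, each interior site contributes a factor $(1+qz)$ (unlit, or lit in $q$ ways at cost one) and each unit of travel a factor $z$. The subtle point, and the step I expect to be the main obstacle, is the justification bookkeeping: the extreme reached sites $-l$ and $r$ must be forced either by a lit lamp there or by the endpoint $k$, so that each element is counted once with its true minimal length. Carrying this out, the side that carries the endpoint produces a summand growing in its size like $((1+qz)z)^{r}$, whose radius of convergence is governed by $(1+qz)z=1$; since $z<1$ this root is smaller than the one coming from the opposite side (governed by $(1+qz)z^{2}=1$), so it is the dominant singularity of $G$. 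Solving $qz^{2}+z-1=0$ gives $z=1/\lambda$ with $\lambda=\frac12(1+\sqrt{4q+1})=\frac12(1+\sqrt{4|D|-3})$, whence $\lim\frac{\ln V(n)}{n}=\ln\lambda$ and $C_{\Z\wr D,S_D}=\frac{2\ln|D|}{\ln(\frac12(1+\sqrt{4|D|-3}))}$.

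Finally, for the numerical lower bound it suffices to show that $g(d):=\frac{2\ln d}{\ln\lambda(d)}$, with $\lambda(d)=\frac12(1+\sqrt{4d-3})$, satisfies $g(d)\geq g(2)$ for all integers $d\geq2$. Equivalently, since all the logarithms are positive, one shows that $d\mapsto\frac{\ln\lambda(d)}{\ln d}$ is non-increasing on $[2,\infty)$, which I would verify by a direct one-variable estimate: the ratio decreases from $\ln(\frac12(1+\sqrt5))/\ln2$ at $d=2$ toward $\frac12$ as $d\to\infty$, since $\lambda(d)\sim\sqrt d$. This yields $C_{\Z\wr D,S_D}\geq g(2)=\frac{2\ln2}{\ln(\frac12(1+\sqrt5))}\approx2{,}88$, completing the proof.
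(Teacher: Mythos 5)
Your proposal is correct, and it frames the proof the same way the paper does (numerator from Theorem~\ref{thmmain}, the formula for $C_{G,S}$ quoted from~\cite{csccollab}), but your computation of the volume growth rate --- the actual content of the proof --- takes a genuinely different route. The paper never computes the word metric: it puts every element of length at most $n$ in a quasi-normal form $t^mAt^i$, where $A$ is a word in $t$ and the $\delta_i$ containing no $t^{-1}$ and no two consecutive $\delta$'s, observes that this determines $V(n)$ up to a polynomial factor ($V'(n)\leq V(n)\leq 8n^3V'(n)$, with $V'$ the number of such words), and extracts the rate from the recurrence $V'(n)=V'(n-1)+(d-1)V'(n-2)$, i.e.\ as the positive root of $x^2-x-(d-1)$. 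You instead compute the word length exactly, $\ell(k,f)=|\supp(f)|+2l+2r-|k|$, and locate the dominant singularity of $\sum_g z^{\ell(g)}$, namely the positive root of $(d-1)z^2+z-1$; this polynomial is the reciprocal of the paper's characteristic polynomial, so the two answers agree. Your route requires the bookkeeping you flag (forcing the extremal visited sites to be lit or to be the endpoint so each element is counted once at its true length), which the paper's sandwich sidesteps entirely, a polynomial ambiguity being harmless for an exponential rate; in exchange your method is sharper, in principle yielding the full growth series in the spirit of Parry's computation~\cite{Parry1992a}, which the paper cites right after this proof. One further difference: the closing bound $2\ln d/\ln\lambda(d)\geq2\ln2/\ln\lambda(2)$ is stated in the proposition but never argued in the paper, so your monotonicity reduction is a genuine addition; as written it is only sketched, but it is easily completed, e.g.\ for $d\geq3$ one has $d\lambda'(d)/\lambda(d)=2d/\bigl(4d-3+\sqrt{4d-3}\bigr)\leq\tfrac12<\ln\lambda(d)/\ln d$ (the last inequality since $\lambda(d)>\sqrt{d}$), so $\ln\lambda(d)/\ln d$ decreases on $[3,\infty)$, and the comparison of $d=2$ with $d=3$ is a direct check.
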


\begin{remark}\label{const2}
For the switch-walk-switch generating set $S_{sws}=\{t,\delta_1,t\delta_1,\delta_1 t,\delta_1 t\delta_1\}$ on the lamplighter group $\Z\wr\Z/2\Z$, we have

$$C_{\Z\wr\Z/2\Z,S_{sws}}=\frac{\liminf\frac{\ln\Fol_{sws}(n)}{n}}{\lim\frac{\ln V_{sws}(n)}{n}}\leq2.$$
\end{remark}

Another direction that can be considered once one has exact evaluations of a F{\o}lner function is studying the power series $\sum_n\Fol(n)x^n$.
The equivalent series have been studied for volume growth (see Grigorchuk-de la Harpe~\cite[Section~(4)]{Grigorchuk1997}).
One central question that a lot of authors have considered is the rationality of those series as a function.
For the example shown here, the power series of the F{\o}lner function is a rational function
$$\sum_{n\in\N}\Fol(n)x^n=\frac{2x}{(1-|D|^2x)^2}-x+x^2P_D(x),$$
where $P_D$ is a polynomial with $\deg{P_D}\leq|D|-3$.

We also obtain results for the Baumslag-Solitar group $BS(1,2)$ (see Definition~\ref{defbs}), however only with respect to the edge boundary.
Taking the notation from the definition, its standard sets are defined the same way as in the lamplighter group.
\begin{thm}\label{bsthm}
Consider the Baumslag-Solitar group $BS(1,2)$ with the standard generating set.
Then for any $n\geq2$ and any $F\subset BS(1,2)$ such that $|F|\leq|F_n|$, we have $\frac{|\partial_{edge}F|}{|F|}\geq\frac{|\partial_{edge}F_n|}{|F_n|}$ (where $F_n$ are the standard F{\o}lner sets), and if $|F|<|F_n|$, the inequality is strict.
\end{thm}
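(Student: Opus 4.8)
\section*{Proof proposal for Theorem~\ref{bsthm}}

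The plan is to exploit the semidirect-product description $BS(1,2)\cong\Z[1/2]\rtimes\Z$, in which an element is a pair $(x,k)$ with $x\in\Z[1/2]$ and $k\in\Z$, and the standard generators act on the right by $(x,k)\mapsto(x,k+1)$ (the letter $t$) and $(x,k)\mapsto(x+2^k,k)$ (the letter $a$). First I would project onto the height $k$ and, for a finite set $F$, record its slices $F^{(k)}=\{x\in\Z[1/2]:(x,k)\in F\}$. The point of this coordinate system is that the edge boundary splits as an \emph{exact} sum
$$|\partial_{edge}F|=\sum_k E_k\big(F^{(k)}\big)+\sum_k\big|F^{(k)}\,\triangle\,F^{(k+1)}\big|,$$
where $E_k(S)$ is the edge boundary of $S$ inside the graph on $\Z[1/2]$ whose edges join $x$ to $x+2^k$. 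It is precisely because every edge is counted once, and edges partition cleanly into these two families ($a$-edges within a height, $t$-edges between consecutive heights), that the argument is confined to the edge boundary: for the outer or inner boundary several boundary edges may land on a single exterior vertex, and the carrying described below makes that multiplicity height-dependent and awkward to control. This is exactly why the theorem is stated only for $\partial_{edge}$, in contrast to Theorem~\ref{thmlamp}.

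Next I would analyse the two terms. The $+2^k$ graph is a disjoint union of bi-infinite lines indexed by the cosets $\Z[1/2]/2^k\Z$, so $E_k(S)\ge 2\,|\pi_k(S)|$, where $\pi_k$ denotes reduction modulo $2^k$ and $|\pi_k(S)|$ counts the occupied cosets, with equality exactly when $S$ meets each occupied coset in a single interval. The Bass--Serre tree structure of $BS(1,2)$ enters here as the factor-$2$ refinement $\Z[1/2]/2^{k+1}\Z\twoheadrightarrow\Z[1/2]/2^k\Z$ (kernel $\cong\Z/2$): raising the height by one splits each occupied coset into at most two, so $|\pi_k(S)|\le|\pi_{k+1}(S)|\le 2|\pi_k(S)|$. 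These coset counts are the precise analogue of the number of admissible lamp configurations in the wreath-product argument, and the refinement inequalities are what couple the slices across heights.

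With these estimates in hand I would run a compression followed by a one-dimensional optimization. First, replace each slice coset by coset with a single interval run; this never increases any $E_k$ and, done compatibly, does not increase the $t$-boundary. Then translate the slices within $\Z[1/2]$ so as to minimize $\sum_k\big|F^{(k)}\triangle F^{(k+1)}\big|$ for the given slice sizes $f_k=|F^{(k)}|$. After these reductions I would bound $|\partial_{edge}F|$ from below by an explicit functional of the height-profile $(f_k)$ alone, using $E_k\ge 2\,|\pi_k(F^{(k)})|$ together with the coset-refinement inequalities to control the symmetric differences, and finally minimize this functional subject to $\sum_k f_k=|F|\le|F_n|$. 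The claim is that the minimizer is the box profile realized by the standard set $F_n$ (a constant slice size over $n$ consecutive heights), that any other profile or any non-interval slice strictly raises the ratio, and that when $|F|<|F_n|$ the minimum is strictly larger; tracking the equality cases then yields both the strict inequality and uniqueness up to translation.

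The main obstacle will be the carrying. The interval shape that is optimal at height $k$ is no longer an interval at height $k+1$, since the step size doubles, so minimizing the intra-slice $a$-boundary (which favors large, coset-saturating intervals) pulls directly against minimizing the inter-slice $t$-boundary (which favors the slices coinciding as subsets of $\Z[1/2]$). Quantifying this trade-off \emph{exactly}, rather than merely up to multiplicative constants, and proving that the box profile is the unique optimizer, is where the real work lies. The factor-$2$ branching of the Bass--Serre tree is simultaneously the mechanism that makes the box optimal and the feature that obstructs passage to the outer and inner boundaries.
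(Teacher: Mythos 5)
Your coordinate system is sound, and in fact it is the paper's own framework in disguise: slicing $F$ by height and recording configurations in $\Z[1/2]$ is exactly the paper's ``associated subgraph'' on the dyadic numbers; your exact splitting of $\partial_{edge}F$ into intra-height and inter-height contributions is the paper's opening remark that the contributions of the two generators are disjoint; and your coset-lines for the step-$2^k$ graphs are the paper's orbits of the maps $x\mapsto x+2^i$. The trouble is that the proposal stops exactly where the proof has to start. Everything after ``with these estimates in hand'' is a plan, and the two steps it rests on are the ones that fail or are missing. First, the compression step is unjustified: replacing each slice's trace on each coset-line by an interval, independently at each height, can strictly \emph{increase} $\sum_k|F^{(k)}\triangle F^{(k+1)}|$. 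Concretely, take $F^{(0)}=F^{(1)}=\{0,2\}$: at height $1$ this is already an interval for the step-$2$ line, but compressing at height $0$ to $\{0,1\}$ raises the symmetric difference from $0$ to $2$. Doing the compression ``compatibly'' is precisely the carrying problem you name at the end; naming it is not solving it. Second, the promised reduction to a functional of the height profile $(f_k)$ alone cannot work as stated: your bound $E_k\geq2|\pi_k(F^{(k)})|$ is in terms of occupied-coset counts, which are not functions of $f_k$, and your refinement inequalities compare $\pi_k(S)$ with $\pi_{k+1}(S)$ for one and the same set $S$, whereas what must be controlled is the coupling of $\pi_k(F^{(k)})$ to $\pi_{k+1}(F^{(k+1)})$ across the symmetric-difference term. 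Minimizing the boundary over all configurations with a given profile is not a one-dimensional problem; it \emph{is} the original problem.

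For comparison, the paper fills this gap with two sharp discrete isoperimetric lemmas on the scale graph (Lemmas~\ref{baums} and~\ref{baums2}): any subgraph on $n$ vertices has at most $e(n)=kn-2^k+1$ edges (where $2^{k-1}<n\leq2^k$), proved by a Harper-style induction that splits the vertex set by parity and rescales the odd part, together with a second version in which the vertex count is replaced by vertices plus nontrivial orbits. That orbit count $o(\cdot)$ is not a technicality: it is the device that converts the unpaired ($b$-boundary) edges into a quantity comparable with the vertex ($a$-boundary) count in Lemma~\ref{comparebs}, and it is what makes the final constant exact rather than exact-up-to-a-factor. Your sketch has no counterpart of it, so even if the compression and profile-reduction steps were repaired, the inequalities available to you would lose a multiplicative constant and could not yield the equality case, the strict inequality for $|F|<|F_n|$, or the uniqueness you invoke (which, incidentally, Theorem~\ref{bsthm} does not even assert). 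To turn the proposal into a proof you would need to formulate and prove, by induction on the parity splitting you already have, a sharp extremal statement of the type ``at most $e(n)$ internal edges for a given vertex-plus-orbit budget''; that statement is the theorem's actual content.
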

This result is not always true for $BS(1,p)$ for larger $p$, and we provide a counter example in Example~\ref{exbsp}.
However this counter example uses that $p$ is significant when compared to the length of the interval defining the standard set, and it is possible that for $BS(1,p)$ as well, standard sets are optimal above a certain size.

We present more detailed definitions in the next section.
In Section~\ref{prelim2}, we present associated graphs, which are the main tool of the proof, and prove some general results.
In particular, we show Lemma~\ref{equiv}, which will be used to obtain that part $(2)$ of Theorem~\ref{thmlamp} follows from part $(1)$.
In Section~\ref{mainsection}, we prove Theorem~\ref{thmlamp}.
In Section~\ref{csc-const-sect}, we prove Proposition~\ref{const} and Remark~\ref{const2}.
Finally, in Section~\ref{bssect}, we prove Theorem~\ref{bsthm} and Example~\ref{exbsp}.

\section{Preliminaries}\label{prel1}

The concept of amenability finds its origins in a 1924 result by Banach and Tarski~\cite{Banach-Tarski-original}, where they decompose a solid ball in $\R^3$ into five pieces, and reassemble them into two balls using rotations.
That is now called the Banach-Tarski paradox.
The proof makes use of the fact that the group of rotations of $\R^3$ admits a free subgroup.
Von Neumann~\cite{Neumann1929} considers it as a group property and introduces the concept of amenable groups.
Nowadays, there are multiple different characterizations of amenability; see books by Greenleaf~\cite{greenleaf} and Tomkowicz-Wagon~\cite{Banach-Tarski}, or an article by Ceccherini-Silberstein-Grigorchuk-la~Harpe~\cite{MR1721355}, or a recent survey by Bartholdi~\cite{bartholdi}.

\begin{defi}[F{\o}lner criterion]\label{folamdef}
	A group $G$ is amenable if and only if for every finite set $S\subset G$ and every $\varepsilon>0$ there exists a set $F$ such that
	
	$$|F\Delta F.S|\leq\varepsilon|F|.$$
\end{defi}

If $G$ is finitely generated, it suffices to consider a single generating set $S$ instead of all finite sets.
We can also apply Definition~\ref{folamdef} for $S\bigcup S^{-1}\bigcup\{\Id\}$.
Then $|F\Delta(S\bigcup S^{-1}\bigcup\{\Id\}).F|$ is the set of vertices in the Cayley graph of $G$ that are at a distance exactly $1$ from $F$.
We denote that the outer boundary $\partial_{out}F$.
Then the condition can be written as $\frac{|\partial_{out}F_n|}{|F_n|}\leq\varepsilon$, or in other words that the  infimum of those quotients should be $0$.
Recall the definition (\ref{defdin}) of the inner boundary.
Finally, we consider $\partial_{edge}F$ to be the set of edges between $F$ and its complement.
Remark that while those values can differ, whether the infimum of $\frac{|\partial F|}{|F|}$ is $0$ or not does not depend on which boundary we consider.

For groups of subexponential growth, for every $\varepsilon$, there is some $n$ such that the ball around the identity of radius $n$ is a corresponding F{\o}lner set.
Note that to obtain a F{\o}lner sequence from this, one needs to consider a subsequence of the sequence of balls of radius $n$.
It is an open question whether in every group of subexponential growth, all balls form a F{\o}lner sequence.
For groups of exponential growth, it is generally not sufficient to consider balls, and it is an open question whether there exists any group of exponential growth where some subsequence of balls forms a F{\o}lner sequence (see for example Tessera~\cite[Question~15]{Tessera2007}).

For two groups $A$ and $B$ and a function $f\in B^A$, denote
$$\supp(f)=\{a\in A:f(a)\neq\Id_B\}.$$
Let $B^{(A)}$ be the set of functions from $A$ onto $B$ with finite support.
\begin{defi}\label{deflamp}
	The (restricted) wreath product $A\wr B$ is the semidirect product $A\ltimes B^{(A)}$ where $A$ acts on $B^{(A)}$ by translation.
\end{defi}
We can write the elements as $(a,f)$ with $a\in A$ and $f\in B^{(A)}$.
The group law is then $(a,f)(a',f')=(aa',x\mapsto f(x)f'(a^{-1}x))$.

Given generating sets $S$ and $S'$ on $A$ and $B$ respectively, we can define a standard generating set on $A\wr B$.
It consists of the elements of the form $(s,\mathbb{Id_B})$ for $s\in S$ (where $\mathbb{Id_B}=\Id_B$ for all $x\in A$), as well as $(\Id_A,\delta_{\Id_A}^{s'})$ for $s'\in S'$ where $\delta_{\Id_A}^{s'}(\Id_A)=s'$ and $\delta_{\Id_A}^{s'}(x)=\Id_B$ for all other $x$.
One can verify that $(a,f)(s,\mathbb{Id_B})=(as,f)$, and $(a,f)(\Id_A,\delta_{\Id_A}^{s'})=(a,f+\delta_a^{s'})$, or in other words the value of $f$ at the point $a$ is changed by $s'$.

Similarly, given F{\o}lner sets $F_A$ and $F_B$ on $A$ and $B$ respectively, one obtains standard F{\o}lner sets on $A\wr B$: 
$$F=\{(a,f):a\in F_A,\supp(f)\subset F_A,\forall x:f(x)\in F_B\}.$$
One can check that $\frac{|\partial_{out}F|}{|F|}=\frac{|\partial_{out}F_A|}{|F_A|}+\frac{|\partial_{out}F_B|}{|F_B|}$.

We will focus on the wreath product $\Z\wr D$.
For $\Z$ we will consider the standard generator, while for $D$ we will take the entire group (except the identity) as generators:

\begin{equation}\label{defst}
S_D=\{t\}\bigcup\{\delta_i:i\in D\setminus\{Id_D\}\}\mbox{ where }t=(1,\mathbbold{0})\mbox{ and }\delta_i=(0,\delta^i_0).
\end{equation}

The Baumslag-Solitar groups are defined as follows:
\begin{defi}\label{defbs}
	The Baumslag-Solitar group $BS(m,n)$ is the two-generator group given by the presentation $\langle a,b:a^{-1}b^ma=b^n\rangle$.
\end{defi}
The standard generating set is $\{a,b\}$.

We will focus on the groups $BS(1,p)$.
That group is isomorphic to the group generated by $x\mapsto px$ and $x\mapsto x+1$ (by mapping $a^{-1}$ and $b$ to them respectively).
By abuse of notation, we will also denote the images of $a$ and $b$ with the same letters.
In that group, any element can be written as $x\mapsto p^nx+f$ with $n\in\Z$ and $f\in\Z[\frac{1}{p}]$.
We then have $(x\mapsto p^nx+f)a=x\mapsto p^{n-1}x+f$ and $(x\mapsto p^nx+f)b=x\mapsto p^nx+(f+p^n)$.

Remark that the subgroup $N=\{x\mapsto x+f:f\in\Z[\frac{1}{p}]\}$ is normal.
Indeed, we have $(x\mapsto p^{-n}(x-f))\circ(x\mapsto p^nx+f)=\Id$ and
$$(x\mapsto p^{-n}(x-f'))\circ(x\mapsto x+f)\circ(x\mapsto p^nx+f')=x\mapsto x+p^nf.$$
Thus $BS(1,p)$ is isomorphic to the semidirect product $\Z\ltimes\Z[\frac{1}{p}]$ defined by the action $n.f=p^nf$.
We therefore write the element $x\mapsto p^nx+f$ as $(n,f)$.
The standard F{\o}lner sets are then expressed in the same way as for wreath products.
In other words:
$$F_n=\{(k,f):k\in[\![0,n-1]\!],f\in\Z,0\leq f<p^n\}.$$

\section{Main concepts of the proof}\label{prelim2}

\begin{defi}\label{optimal}
	We will call a set $F$ in a group $G$ \textit{optimal} with respect to the inner (respectively outer, edge) boundary if for any $F'$ with $|F'|\leq|F|$, it is true that $\frac{|\partial_{in}F'|}{|F'|}\geq\frac{|\partial_{in}F|}{|F|}$ (respectively $\frac{|\partial_{out}F'|}{|F'|}\geq\frac{|\partial_{out}F|}{|F|}$, $\frac{|\partial_{edge}F'|}{|F'|}\geq\frac{|\partial_{edge}F|}{|F|}$), and if $|F'|<|F|$, the inequality is strict.
\end{defi}

\begin{lemma}\label{equiv}
If $F$ is optimal for the outer boundary, up to replacing $F$ with another optimal set of the same size, $F\bigcup\partial_{out}F$ is optimal for the inner boundary.
Furthermore, this describes all optimal sets of that size.
\end{lemma}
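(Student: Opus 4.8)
The plan is to set up an interior–closure duality between the two boundary operators and to transfer optimality across it. Write $\Phi(F)=F\cup\partial_{out}F$ for the \emph{closure} of a finite set and $\operatorname{int}(E)=E\setminus\partial_{in}E$ for its \emph{interior}. Since $S\cup S^{-1}$ is symmetric the Cayley graph is undirected, and a direct check gives the two inclusions $\partial_{in}\Phi(F)\subseteq\partial_{out}F$ and $\partial_{out}\operatorname{int}(E)\subseteq\partial_{in}E$, together with $\operatorname{int}(\Phi(F))\supseteq F$ and $\Phi(\operatorname{int}(E))\subseteq E$. Thus $\Phi$ and $\operatorname{int}$ behave like a Galois connection, and the whole lemma reduces to careful bookkeeping of cardinalities through these inclusions.

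Next, fix an outer-optimal $F$ with $|F|=N$ and $|\partial_{out}F|=M$, so $|\Phi(F)|=N+M$. The first point where the replacement clause enters is in making the boundary count exact: if some $v\in\partial_{out}F$ had all its neighbours inside $\Phi(F)$, then $F\cup\{v\}$ would have size $N+1$ and outer boundary only $M-1$, so $\Phi(F)$ would carry ``redundant'' boundary. After possibly replacing $F$ by a saturated outer-optimal set of the same size (one with $\operatorname{int}(\Phi(F))=F$) we may assume $\partial_{in}\Phi(F)=\partial_{out}F$, whence $|\partial_{in}\Phi(F)|=M$ and the target ratio is $M/(N+M)$. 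In the application the standard sets are already saturated, so this is only a safety clause.

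The comparison itself is then elementary. Given any competitor $E'$ with $|E'|\le N+M$, put $F'=\operatorname{int}(E')$ and write $a=|F'|$, $b=|\partial_{in}E'|$. The inclusion $\partial_{out}F'\subseteq\partial_{in}E'$ gives $b\ge|\partial_{out}F'|$, and a one-line manipulation shows
$$\frac{b}{a+b}\ge\frac{M}{N+M}\iff bN\ge Ma.$$
When $a\le N$, outer-optimality of $F$ applied to $F'$ yields $|\partial_{out}F'|\ge(M/N)\,a$, strictly if $a<N$, hence $bN\ge Ma$ with the required strictness; this is exactly the inner-optimality inequality for $\Phi(F)$. The case $a=0$ (empty interior) gives ratio $1$ and is trivial.

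The main obstacle is the remaining regime, competitors whose interior is \emph{large}, $a=|\operatorname{int}(E')|>N$, for which outer-optimality says nothing. Here one uses that $\Phi(F')\subseteq E'$ forces $|\partial_{out}F'|\le N+M-a<M$, so $F'$ would be a set of size exceeding $N$ whose closure still fits inside volume $N+M$. The heart of the argument is to exclude this via the strict outer-optimality characterisation, i.e. to show $|G|+|\partial_{out}G|>N+M$ for every $G$ with $|G|>N$ (equivalently, that the minimal outer boundary does not drop below $M$ at volumes in $(N,N+M]$, so that $N$ is the largest interior a closed set of volume $\le N+M$ can have). I expect this monotonicity/saturation step to be the crux, as it is the only place that uses more than the formal duality, and in the application it follows from the explicit description of the optimal family in part $(1)$. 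Finally, for the clause ``this describes all optimal sets,'' one runs the estimates backwards: for an inner-optimal $E$ of size $N+M$ the ratio equality forces $|\operatorname{int}(E)|=N$, then $|\partial_{out}\operatorname{int}(E)|=M$ makes $\operatorname{int}(E)$ outer-optimal and $E=\Phi(\operatorname{int}(E))$; hence the inner-optimal sets of that size are precisely the closures of the outer-optimal sets of size $N$, and uniqueness up to translation is inherited from the outer case.
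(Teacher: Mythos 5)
Your formal setup and your treatment of competitors with small interior are sound, and they match the corresponding half of the paper's argument (the paper additionally assumes, without loss of generality, that the competitor $F'$ is itself optimal for the inner boundary, and uses that to prove $\partial_{out}\left(F'\setminus\partial_{in}F'\right)=\partial_{in}F'$; your inclusion $\partial_{out}\left(E'\setminus\partial_{in}E'\right)\subseteq\partial_{in}E'$ plays the same role in that regime). The genuine gap is exactly where you flag it: the large-interior case. You propose to close it by proving that $|G|+|\partial_{out}G|>N+M$ for every $G$ with $|G|>N$, but this is not a consequence of the hypotheses, and you do not prove it. Outer-optimality of $F$ constrains only sets of size at most $N$; for $|G|=N+k$ with $k>0$ it yields only the non-strict bound $|G|+|\partial_{out}G|\geq N+M$ (remove $k$ points one at a time, each removal enlarging the outer boundary by at most one, and apply optimality to the resulting set of size $N$). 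Equality can genuinely occur: in $\Z/n\Z$ with generating set $\{1\}$, an arc $F$ of size $N=n-2$ is outer-optimal with $M=2$, while the arc $G$ of size $n-1$ satisfies $|G|+|\partial_{out}G|=n=N+M$; in that example the inner-optimal ratio at volume $N+M$ is $0$, not $M/(N+M)$, so your closing ``furthermore'' argument, which reads the optimal ratio as $M/(N+M)$, also breaks. Nothing in Definition~\ref{optimal} excludes this behaviour, and it is precisely this equality case that the clause ``up to replacing $F$ with another optimal set of the same size'' is designed to absorb. Your reading of that clause as a saturation fix (arranging $\partial_{in}(F\bigcup\partial_{out}F)=\partial_{out}F$) is a misdiagnosis — saturation is never needed, since $|\partial_{in}(F\bigcup\partial_{out}F)|\leq M$ only helps — and your proposed replacement is itself unjustified, because the observation about $F\cup\{v\}$ produces a set of size $N+1$, not a saturated outer-optimal set of size $N$. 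Finally, deferring the crux to the explicit description of optimal sets in Theorem~\ref{thmlamp}(1) is not available: Lemma~\ref{equiv} is a general statement about arbitrary groups, proved independently, and it is what allows the paper to deduce part (2) of that theorem from part (1).

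The paper closes the large-interior case not by excluding it but by converting the competitor into the required form. With $F'$ assumed inner-optimal, set $F''=F'\setminus\partial_{in}F'$ and use inner-optimality to get $\partial_{out}F''=\partial_{in}F'$, hence $F''\bigcup\partial_{out}F''=F'$. If $|F''|=N+k$ with $k\geq0$, remove any $k$ points of $F''$ to obtain $F'''$ of size $N$; then $|\partial_{out}F'''|\leq|\partial_{out}F''|+k\leq M$, so optimality of $F$ forces $|\partial_{out}F'''|=M$, making $F'''$ another outer-optimal set of size $N$. Moreover $F'''\bigcup\partial_{out}F'''\subseteq F''\bigcup\partial_{out}F''=F'$, and the left side has cardinality $N+M\geq|F'|$, forcing $F'''\bigcup\partial_{out}F'''=F'$. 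Since $F'$ was taken inner-optimal, this single removal-and-identification step simultaneously produces the replacement set, shows its closure is inner-optimal, and establishes the ``furthermore'' description of all optimal sets of that size. Without this step (or a correct substitute for your unproven monotonicity claim), your argument is incomplete at its acknowledged crux.
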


\begin{proof}
We will prove the large inequality.
The case for the strict inequality is equivalent.

Let $F$ be optimal for the outer boundary and consider $F'$ such that $|F'|\leq|F\bigcup\partial_{out}F|$ and the quotient of the inner boundary is smaller.
Without loss of generality, we can assume that $F'$ is optimal for the inner boundary.

Let $F''=F'\setminus\partial_{in}F'$.
Observe that $\partial_{out}F''\subset\partial_{in}F'$.
We first claim that $F'$ being optimal implies that $\partial_{out}F''=\partial_{in}F'$.
Indeed, $|F''\bigcup\partial_{out}F''|\leq|F'|$, and
$$\frac{|\partial_{in}(F''\bigcup\partial_{out}F'')|}{|F''\bigcup\partial_{out}F''|}\leq\frac{|\partial_{out}F''|}{|F''\bigcup\partial_{out}F''|}=\frac{\frac{|\partial_{out}F''|}{|F''|}}{1+\frac{|\partial_{out}F''|}{|F''|}},$$
while
$$\frac{|\partial_{in}F'|}{|F'|}=\frac{\frac{|\partial_{in}F'|}{|F''|}}{1+\frac{|\partial_{in}F'|}{|F''|}}.$$
As $\partial_{out}F''\subset\partial_{in}F'$ and $\frac{x}{x+1}$ is an increasing function in $\R_+$, the first quantity is smaller then the second, and by $F'$ being optimal, we have an equality.

We now consider cases for the size of $F''$.
If $|F''|<|F|$, then we can apply the assumption that $F$ is optimal and we get
$$\frac{|\partial_{out}F|}{|F|}\leq\frac{|\partial_{out}F''|}{|F''|}=\frac{|\partial_{in}F'|}{|F'\setminus\partial_{in}F'|}=\frac{\frac{|\partial_{in}F'|}{|F'|}}{1-\frac{|\partial_{in}F'|}{|F'|}}.$$
However, applying the initial assumption by which we chose $F'$ gives us the inverse inequality, and strict.

We are left with the case where $|F''|\geq|F|$.
Let $k=|F''|-|F|$ and remove any $k$ points from $F''$ to obtain $F'''$.
We obtain a set that is the same size as $F$ and has an outer boundary no larger than that of $F$.
It is therefore another optimal set of the same size, and by the optimality of $F'$ for the inner boundary, $F'''\bigcup\partial_{out}F'''=F'$, which concludes the proof.
\end{proof}

The central idea of this paper is to work on an associated graph structure which we define for semidirect products.
\begin{defi}\label{assgr}
Consider a semidirect product $G=H\ltimes N$.
Consider a generating set $S$ of $G$.
We define the \textit{associated graph} as the directed labeled graph $\Gamma=\Gamma_S$ with vertex set $V(\Gamma)=N$ and edge set 
	
$$\overrightarrow{E}(\Gamma)=\left\{\overrightarrow{(f_1,f_2)}:\exists s\in S,h_1,h_2\in H\mbox{ such that }h_1f_1s=h_2f_2\right\}.$$
With these notations, the edge $\overrightarrow{(f_1,f_2)}$ is labeled $s$.
\end{defi}
As mentioned, the two examples we will consider here are the lamplighter group $\Z\wr D$ and the Baumslag-Solitar group $BS(1,p)$.
In both examples we have $H=\Z$.
In the case of the wreath product we have $N=D^{(\Z)}$, and for $BS(1,p)$, $N$ is the set of $p$-adic numbers.

We define an associating function $\phi:G\rightarrow\mathcal{P}(\overrightarrow{E}(\Gamma))$ by 
$$\phi(hf)=\left\{\overrightarrow{(f,f')}:hfs=h'f'\mbox{ for some }h'\in H,s\in S\right\}.$$
\begin{defi}\label{asssubgr}
Consider a semidirect product $G=H\ltimes N$.
Let $F$ be a finite subset of $G$.
The \textit{associated subgraph} of $F$ is the (directed) subgraph of the associated graph $\Gamma$ made of the edges
$$\bigcup_{x\in F}\phi(x)$$
and all adjacent vertices.
\end{defi}
We will provide a bound for the boundary of a set based on a formula on the associated subgraph, and maximize the value of that formula over all subgraphs of $\Gamma$ no larger (in terms of number of edges) than the associated subgraph.

\section{The wreath product $\Z\wr D$}\label{mainsection}

In this section we provide the proof of Theorem~\ref{thmlamp}, the larger part being a proof of Theorem~\ref{thmlamp}$(1)$.
In other words, we show that for the wreath product with generating set $S_D$, the standard sets are optimal with respect to the outer boundary, and uniquely so up to translation.
We first prove an inequality relying the isoperimetry of a subset with values on the associated graph.
For the entirety of this section, we denote $d=|D|$.

\begin{lemma}\label{main}
Let $F$ be a finite set in $\Z\wr D$ with $\frac{|\partial_{out}F|}{|F|}\leq\frac{1}{d-1}$.
Let $\Gamma$ be the associated graph (see Definition~\ref{assgr}).
Then

$$\frac{|\partial_{out}F|}{|F|}\geq\min\left(\frac{2(d-1)|V(G)|}{|E(G)|}\mbox{ for }G\mbox{ subgraph of }\Gamma\mbox{ with }|E(G)|\leq(d-1)|F|\right).$$
\end{lemma}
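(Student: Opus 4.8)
The plan is to translate the isoperimetric question on $\Z\wr D$ into a combinatorial statement on the associated graph $\Gamma$, and then to read off the claimed lower bound from a counting argument on edges versus vertices of the associated subgraph. First I would recall the structure of $\Gamma$ for $G=\Z\wr D$: its vertex set is $N=D^{(\Z)}$ (the lamp configurations), and from each vertex $f$ there are edges induced by the generators in $S_D$. The generator $t$ moves the cursor without changing the configuration, so it induces no edge in $\Gamma$ (since $h_1 f t = h_2 f$ forces $f_1=f_2$, a loop we discard), whereas each $\delta_i$ for $i\in D\setminus\{\Id_D\}$ changes the value of $f$ at the current cursor position, producing a genuine edge $\overrightarrow{(f,f')}$. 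Thus every vertex of $\Gamma$ has out-degree (and by symmetry in-degree) exactly $d-1$ at each coordinate where an edit is possible; the key point is that edges of $\Gamma$ record exactly the lamp-changing moves.

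Next I would make precise the relation between $|\partial_{out}F|$ and the associated subgraph $G\subset\Gamma$ defined by $\bigcup_{x\in F}\phi(x)$. The idea is that a point $x=(k,f)\in F$ contributes to the outer boundary precisely when one of its neighbors $x\cdot s$ lies outside $F$, and the generators split into the cursor-moving generator $t$ (which contributes boundary controlled by the $\Z$-direction) and the lamp-editing generators $\delta_i$ (whose boundary contribution is encoded by edges of $\Gamma$ that are present in the subgraph but whose endpoint configurations are not fully realized in $F$). I would set up the bookkeeping so that each edge of the associated subgraph carries a bounded amount of boundary, and each vertex of $G$ corresponds to a configuration appearing in $F$; the factor $2(d-1)$ should emerge because each configuration can be edited in $d-1$ ways and each such editable coordinate contributes boundary on both the $+$ and $-$ sides of the cursor, while $|E(G)|\leq (d-1)|F|$ holds because each of the $|F|$ elements contributes at most $d-1$ edges via $\phi$. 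The hypothesis $\frac{|\partial_{out}F|}{|F|}\leq\frac{1}{d-1}$ should be what guarantees that $F$ is "thick enough" that these per-edge and per-vertex counts do not degenerate — i.e. that most configurations in $G$ are actually witnessed by a full cursor-interval in $F$, so that the vertex count $|V(G)|$ is genuinely comparable to the relevant portion of $|F|$.

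The heart of the argument is then a minimization: having bounded $|\partial_{out}F|$ from below by an expression of the form $2(d-1)|V(G)|$ and having $|F|$ comparable to $|E(G)|/(d-1)$ (up to the contribution of the small-boundary hypothesis), the ratio $\frac{|\partial_{out}F|}{|F|}$ is bounded below by $\frac{2(d-1)|V(G)|}{|E(G)|}$ for the particular subgraph $G$ arising from $F$. Since that subgraph satisfies $|E(G)|\leq(d-1)|F|$, the quantity is at least the minimum of $\frac{2(d-1)|V(G)|}{|E(G)|}$ taken over all subgraphs of $\Gamma$ with at most $(d-1)|F|$ edges, which is exactly the claimed bound. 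I would therefore structure the proof as: (i) identify $\phi(x)$ concretely and prove $|E(G)|\le (d-1)|F|$; (ii) prove the lower bound $|\partial_{out}F|\ge 2(d-1)|V(G)|$ using the small-boundary hypothesis; (iii) conclude by monotonicity/minimization.

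The main obstacle I expect is step (ii), namely establishing $|\partial_{out}F|\geq 2(d-1)|V(G)|$ cleanly. The delicate part is relating vertices of the associated subgraph (which are configurations, forgetting the cursor position) back to boundary elements of $F$ (which remember the cursor): a single configuration $f$ may be realized by many cursor positions in $F$, so I must argue that each vertex of $G$ forces at least $2(d-1)$ boundary elements of $F$ without double-counting across configurations. I anticipate this is where the hypothesis $\frac{|\partial_{out}F|}{|F|}\le\frac{1}{d-1}$ is essential: it should rule out the pathological case where a configuration appears with a fragmented or tiny set of cursor positions, ensuring instead that each relevant configuration contributes boundary on both ends of its cursor-interval (the factor $2$) and across all $d-1$ editable values (the factor $d-1$), giving the required count.
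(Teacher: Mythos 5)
Your step (i) matches the paper's setup: the vertices of $\Gamma$ are lamp configurations, $t$ produces no edges, each $\delta_i$ produces one edge per element of $F$, and in fact the union $\bigcup_{x\in F}\phi(x)$ is disjoint, so $|E(\widetilde{F})|=(d-1)|F|$ exactly, where $\widetilde{F}$ is the associated subgraph. The proposal fails at step (ii): the inequality $|\partial_{out}F|\geq2(d-1)|V(\widetilde{F})|$ is false for $d\geq3$, and the heuristic behind it is wrong. Take $F=F_n=\{(k,f):k\in[\![1,n]\!],\supp(f)\subset[\![1,n]\!]\}$ with $n\geq2(d-1)$, so that the hypothesis $\frac{|\partial_{out}F_n|}{|F_n|}=\frac{2}{n}\leq\frac{1}{d-1}$ holds. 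Its associated subgraph has $|V(\widetilde{F_n})|=d^n$ (all configurations supported in $[\![1,n]\!]$), while $\partial_{out}F_n=\{(0,f),(n+1,f):\supp(f)\subset[\![1,n]\!]\}$ has exactly $2d^n=2|V(\widetilde{F_n})|$ elements: each configuration contributes exactly two boundary points, one at each end of its cursor interval, \emph{independently} of $d$. (This had to fail: the lemma holds with equality on the sets $F_n$, so no intermediate bound stronger by a factor of $d-1$ can be true; note also that your own bookkeeping is inconsistent, since $|\partial_{out}F|\geq2(d-1)|V|$ together with $|F|=|E|/(d-1)$ would give $\frac{2(d-1)^2|V|}{|E|}$, not the stated $\frac{2(d-1)|V|}{|E|}$.) The factor $d-1$ in the statement does not come from a per-vertex count of $2(d-1)$ boundary points; it comes from the edge count, via $\frac{|\partial_{out}F|}{|F|}=\frac{(d-1)|\partial_{out}F|}{|E(\widetilde{F})|}$, so that a per-vertex count of $2$ is what is needed. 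Your worry about double-counting across configurations, on the other hand, is a non-issue: boundary points attached to the vertex $f$ have the form $(k,f)$, so points attached to distinct vertices are automatically distinct.

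What your plan genuinely misses is that the per-vertex count of $2$ fails for some vertices, and this is the actual technical difficulty. A vertex of $\widetilde{F}$ need not be the configuration of any element of $F$: it can occur only as the endpoint of a single edge, i.e.\ as a configuration obtained from some $(k_0,f')\in F$ by one lamp edit but never realized in $F$. Such a vertex (a \emph{leaf} in the paper's terminology) guarantees only the one boundary point $(k_0,f)$, and the honest inequality is $|\partial_{out}F|\geq2|V(\widetilde{F})|-|L(\widetilde{F})|$. Contrary to your reading, the hypothesis $\frac{|\partial_{out}F|}{|F|}\leq\frac{1}{d-1}$ does not rule out such vertices or force configurations to be witnessed by ``full cursor-intervals''; leaves occur for perfectly reasonable sets. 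The paper deals with them by a separate induction (Lemma~\ref{leaf}): deleting a leaf together with its unique edge does not increase the ratio $\frac{2|V(G)|-|L(G)|}{|E(G)|}$ \emph{provided} this ratio is at most $1$, and this is the only place where the hypothesis of the lemma is used; the induction then shows that the minimum of $\frac{2|V(G)|-|L(G)|}{|E(G)|}$ over graphs with at most $n$ edges dominates the minimum of $\frac{2|V(G)|}{|E(G)|}$ over graphs with at most $n-1$ edges. Without the leaf bookkeeping and this deletion induction, the argument you outline cannot be completed.
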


\begin{proof}
As mentioned below the definition of associated graph, the vertex set of $\Gamma$ is $D^{(\Z)}$.
The edge set is $\left\{\overrightarrow{(f,g)}:\exists x_0:f(x)=g(x)\iff x\neq x_0\right\}$.
We disregard the labels and think of the graph as unlabeled.

Consider a finite set $F$ of elements of $\Z\wr D$.
Let $\widetilde{F}$ be the associated subgraph (see Definition~\ref{asssubgr}).
Remark that $|E(\widetilde{F})|=(d-1)|F|$.
A \textbf{leaf} we call a vertex which is included in exactly one edge of the subgraph and is at the tail of that edge.
The set of leaves we denote by $L(\widetilde{F})$.

We would like to estimate, for each vertex $f$ in $\widetilde{F}$, the number of points of the form $(k,f)$ in $\partial_{out}F$.
We will show that if $f$ is a leaf, there is at least one such point, and if $f$ is not a leaf, there are at least two.
The first case follows directly from the definition of a leaf.
Indeed, let $f$ be at the tail of $\overrightarrow{(f',f)}$.
Then for some $k_0$ we have $f(k_0)\neq f'(k_0)$ and $(k_0,f')\in F$.
Then for $i=(f'(k_0))^{-1}f(k_0)\in D$, we have $(k_0,f')\delta_i=(k_0,f)\in\partial_{out}F$.

Assume that $f$ is not a leaf.
Then either there are at least two edges ending in $f$, or there is at least one edge starting at $f$.
Once again, the first case follows directly from the definition of a leaf.
In the second case, there exists $k_0$ such that $(k_0,f)\in F$.
Let $a=\max\{k:(k,f)\in F\}$ and $b=\min\{k:(k,f)\in F\}$.
Then $(a+1,f)=(a,f)t$ and $(b-1,f)=(b,f)t^{-1}$ are in the outer boundary of $F$.

We obtain:

\begin{equation}\label{ineq}
|\partial_{out}F|\geq2(|V(\widetilde{F})|-|L(\widetilde{F})|)+|L(\widetilde{F})|=2|V(\widetilde{F})|-|L(\widetilde{F})|.
\end{equation}

Remark that if there are no leaves, the desired result follows.

\begin{lemma}\label{leaf}
Fix $n$.
Assume that
$$\min\left(\frac{2|V(G)|-|L(G)|}{|E(G)|}:G\mbox{ subgraph of }\Gamma\mbox{ with }|L(G)|\neq0\mbox{ and }|E(G)|\leq n\right)\leq1.$$
Then
\begin{equation*}
\begin{split}
&\min\left(\frac{2|V(G)|-|L(G)|}{|E(G)|}:G\mbox{ subgraph of }\Gamma\mbox{ with }|L(G)|\neq0\mbox{ and }|E(G)|\leq n\right)\\
&\geq\min\left(\frac{2|V(G)|}{|E(G)|}:G\mbox{ subgraph of }\Gamma\mbox{ with }|E(G)|\leq n-1\right).
\end{split}
\end{equation*}
\end{lemma}

\begin{proof}
We will prove the lemma by induction on $n$.
For small enough $n$, the assumed inequality is impossible, which gives us the base of the induction.

Assume that the statement is true for $n-1$ and consider $G$ with $n$ edges such that

$$\frac{2|V(G)|-|L(G)|}{|E(G)|}=\min\left(\frac{2|V(H)|-|L(H)|}{|E(H)|}:|L(H)|\neq0\mbox{ and }|E(H)|\leq n\right)\leq1.$$

If $G$ does not contain a leaf, the statement is trivial.
We now assume that $G$ contains a leaf.
Let $G'$ be the subgraph obtained by removing that leaf and the edge leading to it.
Then $|V(G')|=|V(G)|-1$ and $|E(G')|=n-1$.

Notice also that while new leaves may have appeared, at most one was removed, and thus $|L(G')|\geq|L(G)|-1$.
Therefore, as $\frac{2|V(G)|-|L(G)|}{|E(G)|}\leq1$, we have:
$$\frac{2|V(G')|-|L(G')|}{|E(G')|}\leq\frac{2|V(G)|-|L(G)|-1}{|E(G)|-1}\leq\frac{2|V(G)|-|L(G)|}{|E(G)|}.$$

It is in particular less than $1$, and we obtain:
$$\frac{2|V(G')|-|L(G')|}{|E(G')|}\geq\min\left(\frac{2|V(H)|}{|E(H)|}:H\mbox{ subgraph of }\Gamma\mbox{ with }|E(H)|\leq n-1\right),$$
either from the induction hypothesis or directly, depending on whether $G'$ has a leaf or not.
\end{proof}
Remark that $\frac{2|V(\widetilde{F})|-|L(\widetilde{F})|}{|E(\widetilde{F})|}\leq\frac{(d-1)|\partial_{out}F|}{|F|}\leq1$ by the assumption of the lemma.
The result of Lemma~\ref{main} then follows from equation (\ref{ineq}) and applying Lemma~\ref{leaf} for $n=|E(\widetilde{F})|$.
\end{proof}

Having proven Lemma~\ref{main}, we now need to show that the images of the standard sets $F_n$ minimize $\frac{2|V(G)|}{|E(G)|}$ over subgraphs of $\Gamma$ with a fixed amount of edges.
The associated subgraph of $F_n$ (see Definition~\ref{asssubgr}) is the subgraph of $\Gamma$ with vertices $\{f:\supp(f)\subset[\![1,n]\!]\}$ and all edges of $\Gamma$ between two of those vertices.
It has $d^n$ vertices and $n(d-1)d^n$ edges.

\begin{lemma}\label{inversecubal}
Let $G$ be a subgraph of $\Gamma$ with at most $n(d-1)d^n$ edges.
Then $\frac{2|V(G)|}{|E(G)|}\geq\frac{1}{n}$, and if they are equal, $G$ is a $d$-hypercube with double edges with $d^n$ vertices.
\end{lemma}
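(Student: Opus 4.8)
The plan is to reduce the lemma to a sharp edge-isoperimetric inequality on $\Gamma$ and to prove the latter by an entropy (Han-inequality) argument, which is what makes the equality analysis clean. Forgetting labels, $\Gamma$ has vertex set $D^{(\Z)}$ and a pair of opposite directed edges between any two functions differing in exactly one coordinate. For a finite vertex set $A\subseteq D^{(\Z)}$, write $e(A)$ for the number of ordered Hamming-distance-$1$ pairs in $A$, i.e. the number of directed edges of $\Gamma$ induced on $A$. The only reductions I need are that for any subgraph $G$ one has $|E(G)|\le e(V(G))$ (the edges of $G$ lie among those $\Gamma$ induces on $V(G)$), and that increasing $|V(G)|$ by keeping vertices only helps the lower bound. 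So it suffices to control $e(A)$ in terms of $|A|$, and to invoke the global edge budget $|E(G)|\le n(d-1)d^n$ only in one case below.

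The core step is the inequality
\[
e(A)\le (d-1)\,|A|\,\log_d|A|,
\]
valid for every finite $A\subseteq D^{(\Z)}$, with equality exactly when $A$ is a subcube. To prove it, let $X=(X_i)$ be uniform on $A$, with $i$ ranging over the finitely many coordinates active in $A$, and for each $i$ let $c_i(f)$ be the number of elements of $A$ agreeing with $f$ off coordinate $i$. The out-degree of $f$ in $\Gamma[A]$ is $\sum_i\bigl(c_i(f)-1\bigr)$, so $e(A)=\sum_{f\in A}\sum_i\bigl(c_i(f)-1\bigr)$. Since conditioning reduces entropy, $H(X_i\mid X_{\neq i})\le H(X_i\mid X_{<i})$, and summing the chain rule gives Han's inequality $\sum_i H(X_i\mid X_{\neq i})\le H(X)=\log_d|A|$; as $X$ is uniform, $H(X_i\mid X_{\neq i})=\frac1{|A|}\sum_{f\in A}\log_d c_i(f)$. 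Finally, the concave function $(d-1)\log_d c$ and the line $c-1$ agree at $c=1$ and $c=d$, so $c-1\le(d-1)\log_d c$ on $[1,d]$, with equality only at $c\in\{1,d\}$. Combining the three ingredients yields $e(A)/|A|\le (d-1)\sum_i H(X_i\mid X_{\neq i})\le (d-1)\log_d|A|$.

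The lemma then follows by a dichotomy on $N:=|V(G)|$. If $N\le d^n$, then $|E(G)|\le e(V(G))\le (d-1)N\log_d N\le (d-1)nN$, so $\frac{2|V(G)|}{|E(G)|}\ge\frac{2}{(d-1)n}$, and this is strict once $N<d^n$ because then $\log_d N<n$. If instead $N>d^n$, I use the edge budget directly: $\frac{2|V(G)|}{|E(G)|}\ge\frac{2N}{n(d-1)d^n}>\frac{2}{(d-1)n}$, again strict. Thus the minimum is the value $\frac{2}{(d-1)n}$ realised by the standard cube $\{f:\supp(f)\subset[\![1,n]\!]\}$, which is exactly the ratio that propagates through Lemma~\ref{main} (multiply by $d-1$) to give $|\partial_{out}F_n|/|F_n|=2/n$.

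For the equality case I would trace the two inequalities used in the core step. Equality in Han's inequality forces the coordinates of $X$ to be independent, so $A=\prod_i A_i$ is a box; equality in $c-1\le(d-1)\log_d c$ forces $c_i(f)=|A_i|\in\{1,d\}$ for every active $i$, so each coordinate is either frozen or ranges over all of $D$. Hence $A$ is a subcube, and the remaining strictness (requiring $N=d^n$ and $|E(G)|=e(V(G))$) pins its dimension to $n$ and forces $G$ to carry all induced edges, i.e. $G$ is the $d$-hypercube with double edges on $d^n$ vertices, unique up to the choice of frozen coordinates and values (that is, up to translation). I expect the main obstacle to be precisely this sharp equality analysis rather than the bound itself: the compression/induction natural to $\Gamma$ (in the spirit of Lemma~\ref{leaf}) reproduces the inequality but makes it delicate to certify that \emph{only} full subcubes attain equality, which is why the entropy route, with its transparent equality conditions, is preferable here.
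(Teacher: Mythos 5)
Your proposal is correct, and it reaches the sharp bound by a genuinely different route from the paper. The paper first symmetrizes directed edges by a swap/parity argument, then proves the key estimate $|E(X)|\le\frac{d-1}{2}|V|\log_d|V|$ (with equality only for subcubes) by a double induction on $d$ and $|V|$: the base case $d=2$ is delegated to Harper's theorem on cubal sets (Theorem~\ref{thmharper}), and the induction step partitions the vertex set by the value at one coordinate, bounds the cross-edges by $\sum_{i<j}\min(V_i,V_j)$, and disposes of the resulting real inequality $\sum_iCv_i\ln v_i+\sum_{i<j}\min(v_i,v_j)\le0$ via a KKT/Lagrange-multiplier analysis, whose stationary-point computation is also what certifies uniqueness. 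You instead prove the equivalent ordered-pair bound $e(A)\le(d-1)|A|\log_d|A|$ in one stroke from Han's inequality plus the chord estimate $c-1\le(d-1)\log_d c$ on $[1,d]$ (valid since every fiber has at most $d$ points, with equality only at $c\in\{1,d\}$); this is self-contained (no Harper, no multivariable optimization), handles the directed/undirected issue trivially via $|E(G)|\le e(V(G))$ rather than the paper's edge-swapping, and, as you anticipated, makes the equality case transparent: the pointwise condition forces fibers of size $1$ or $d$, and Han-equality forces independence, hence a box, hence a subcube. Your concluding dichotomy ($N\le d^n$ via the isoperimetric bound, $N>d^n$ via the edge budget $n(d-1)d^n$) supplies exactly the bookkeeping that the paper leaves implicit in the line ``Lemma~\ref{inversecubal} follows,'' including the strictness claims.

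Two remarks. First, what you actually proved is $\frac{2|V(G)|}{|E(G)|}\ge\frac{2}{(d-1)n}$ rather than the printed $\frac{1}{n}$, and your version is the right one: the doubled $n$-dimensional $d$-hypercube has ratio exactly $\frac{2}{(d-1)n}$, which agrees with $\frac1n$ only for $d=3$ and is smaller for $d\ge4$, so the lemma as printed carries a misplaced factor of $d-1$; it is your inequality that combines with the factor $2(d-1)$ in Lemma~\ref{main} to yield $|\partial_{out}F_n|/|F_n|=2/n$, as you note. Second, the one step you assert without proof --- that equality in Han's inequality forces mutual independence of the coordinates --- is standard but deserves a line in a final write-up: termwise equality gives $I(X_i;X_{>i}\mid X_{<i})=0$ for every $i$, and these conditions propagate by induction (using $X_1\perp X_{>1}$ to strip the conditioning on the first coordinate) to full independence, after which $A=\supp(X)=\prod_i\supp(X_i)$ is indeed a box and your pointwise condition pins each factor to size $1$ or $d$.
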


\begin{proof}
We will first see that we can assume that any directed edge is present simultaneously with its inverse.
Indeed, consider the set of directed edges $\overrightarrow{(f,g)}$ in $G$ such that $\overrightarrow{(g,f)}$ is not in $G$.
If there are at least two such edges, removing one and adding the inverse of the other changes neither $|V(G)|$ nor $|E(G)|$.
If there is exactly one such edge $\overrightarrow{(f,g)}$, then $|E(G)|$ is odd, and so it is at most $n(d-1)d^n-1$.
In that case by adding the edge $\overrightarrow{(g,f)}$ we obtain another subgraph with at most $n(d-1)d^n$ edges, and such that the quotient we consider is smaller.

We can therefore assume that any directed edge is present simultaneously with its inverse.
We replace every couple of edges $\overrightarrow{(f,g)}$ and $\overrightarrow{(g,f)}$ with one undirected edge $(f,g)$.
We will call the graphs we obtain $\overline{\Gamma}$ and $\overline{G}$.
We need to show that $\frac{|V(\overline{G})|}{|E(\overline{G})|}\geq\frac{1}{n}$, and if they are equal, $\overline{G}$ is a $d$-hypercube with $d^n$ vertices.
We prove a more general statement:

\begin{lemma}
Let $X$ be a subgraph of $\overline{\Gamma}$ with $V$ vertices. Then it has at most $\frac{d-1}{2}V\log_d(V)$ edges and equality is obtained only if $X$ is a $d$-hypercube.
\end{lemma}

\begin{proof}
We obtain that by induction on $d$ and then on $V$.

For $d=2$, remark that $\overline{\Gamma}$ is the bi-infinite hypercube.
As $X$ is finite, it is contained in some finite hypercube.
In that case, the question of maximizing the number of edges on a fixed number of vertices has already been answered in literature.
One proof is presented in Harper's book~\cite[Section~1.2.3]{Harper2004}.
Taking notations from the book, consider a subset $C\subset\Z$ of cardinal $c$, and a vertex $f$ of $\overline{\Gamma}$ with $\supp(f)\bigcap C=\emptyset$.
The set
$$\left\{g\in V(\overline{\Gamma}):f(x)=g(x)\forall x\notin C\right\}$$
is called a \textbf{$c$-subcube}.
A vertex set $S$ of cardinal $k=\sum_{i=1}^K2^{c_i}$ with $0\leq c_i<c_j$ for $i<j$ is \textbf{cubal} if it is a disjoint union of $c_i$-subcubes with the $c_i$-subcube being contained in the neighborhood of the $c_j$-subcube for all $i<j$.
Here, neighborhood means the set of points at distance $1$ in graph distance.
Remark that two cubal sets of the same cardinality are isomorphic.

By abuse of notation, for a set $S$ of vertices in $\Gamma$, we will denote by $E(S)$ the set of all edges between two vertices of $S$.
Then Theorem~$1.1$ from the cited section states
\begin{thm}[{\cite[Section~1.2.3~-~Theorem~1.1]{Harper2004}}]\label{thmharper}
	$S$ maximizes $|E(S)|$ over sets with cardinal $|S|$ if and only if $S$ is cubal.
\end{thm}

Computing the edges of a cubal set is not difficult, and this completes the case of $d=2$.
We now assume the result to be true for $d-1$ and will prove it for $d$ by induction on $V$.
The cases where $V\leq d-1$ are trivial.
We assume the result to be true for $|V(X)|\leq V-1$ and prove it for $V$.

Since $X$ is finite, without loss of generality we can assume its vertices are in $D^{(\N_0)}$, and that there are vertices of $X$ with different values in $0$.
We write the elements of $D$ as $g_i$, $i=1,2,\dots,d$.
Let $X_i$ be the subgraph of $X$ induced by the vertices $f$ with $f(0)=g_i$.
If one of them is empty, the proof of the induction step follows from the proof for $d-1$.
Therefore for the rest of this proof we assume that $X_i$ is non-empty for every $i$.
Let $V_i=|V(X_i)|$ and let $v_i=\frac{V_i}{V}$.
Remark that $\sum_{i=1}^dv_i=1$.
Then we have

$$|E(X)|\leq\sum_{i=1}^d|E(X_i)|+\sum_{1\leq i<j\leq d}\min(V_i,V_j).$$

By induction hypothesis, for all $i$:

$$|E(V_i)|\leq\frac{d-1}{2}V_i\log_d(V_i).$$

It is therefore sufficient to prove that
$$\frac{d-1}{2}V\log_d(V)\geq\sum_{i=1}^d\frac{d-1}{2}V_i\log_d(V_i)+\sum_{1\leq i<j\leq d}\min(V_i,V_j).$$

Remark that $\frac{d-1}{2}V\log_d(V)=\sum_{i=1}^d\frac{d-1}{2}V_i\log_d(V)$.
Denoting $C=\frac{d-1}{2\ln(d)}$ and dividing by $V$ we get the equivalent inequality:

$$\sum_{i=1}^dCv_i\ln(v_i)+\sum_{1\leq i<j\leq d}\min(v_i,v_j)\leq0.$$

Denote the left-hand side with $f(\bar{v})$ where $\bar{v}=(v_1,\dots,v_d)$.
The idea is to use the Lagrange Multipliers method to show that $f$ only finds a maximum when $v_1=v_2=\dots=\frac{1}{d}$ (recall the condition $\sum_{i=1}^dv_i=1$).
More precisely however, to avoid difficulties with the minimum function, we assume that $v_1\leq v_2\leq\dots\leq v_d$ and use Karush–Kuhn–Tucker~(KKT) conditions.
We obtain $f(\bar{v})=\sum_{i=1}^dCv_i\ln(v_i)+\sum_{i=1}^d(d-i)v_i$ and the Lagrangian function

$$L(\bar{v},\lambda,\bar{\mu})=f(\bar{v})+\lambda\left(\sum_{i=1}^dv_i-1\right)+\sum_{i=1}^{d-1}\mu_i(v_{i+1}-v_i).$$

Remark that we seek to maximize $f$ rather than minimize it, and the inequality $v_{i+1}-v_i$ is positive rather than negative - so the entire equation has the opposite sign of what one usually finds in KKT conditions.
We pose $v_0=\mu_0=\mu_d=0$ to simplify notation further in the proof.
Remark that $v_1\geq v_0$ and $\mu_0(v_1-v_0)=0$.
The necessary conditions we obtain are then, for $i=1,\dots,d$:
$$\frac{df}{dv_i}+\lambda+\mu_{i-1}-\mu_i=0,$$
$$\mu_i(v_{i+1}-v_i)=0,$$
and also $\mu_i\geq0$.
We still have $\sum_{i=1}^dv_i=1$ and $v_{i+1}\geq v_i$ for $i=0,\dots,d-1$.

We calculate
$$\frac{df}{dv_i}+\lambda+\mu_{i-1}-\mu_i=C(1+\ln v_i)+d-i+\mu_{i-1}-\mu_i+\lambda,$$
and thus
\begin{equation}\label{lambda}
-\lambda=C(1+\ln v_i)+d-i+\mu_{i-1}-\mu_i.
\end{equation}

Taking a sum over all $i$ and dividing by $d$ we obtain
$$-\lambda=C(1+\frac{1}{d}\sum_{i=1}^d\ln v_i)+\frac{d-1}{2}.$$

We replace that quantity in~\ref{lambda}:
$$C(1+\ln v_i)+d-i+\mu_{i-1}-\mu_i=C(1+\frac{1}{d}\sum_{i=1}^d\ln v_i)+\frac{d-1}{2}$$
and thus, expanding $C$:
$$\frac{d-1}{2d}\left((d-1)\log_dv_i-\sum_{j\neq i}v_j\right)=\mu_i-\mu_{i-1}+i-\frac{d+1}{2}.$$

Denote by $M$ the $d$ by $d$ matrix with $d-1$ on the diagonal and $-1$ on all other entries.
Let $u_i=\log_dv_i$ and $U=\bar{u}^\top$.
The equation can then be written
$$\frac{d-1}{2d}MU=\left(\mu_i-\mu_{i-1}+i-\frac{d+1}{2}\right)_i.$$

We now seek to understand the matrix $M$.
Consider a vector $Y\in\ker M$.
Then for all $i$, $dy_i-\sum y_i=0$, and so $y_i=y_j$ for all $i,j$.
Inversely, any vector of this type is in $\ker M$.
Therefore $\ker M$ is the vector space generated by the vector $(1)_i$.
It follows that the degree of the image of $M$ is $d-1$.

Consider now a vector $Y$ such that $\sum y_i=0$.
It is easy to check that $MY=dY$.
Therefore the image of $M$ is exactly the space of vectors with sum of the coordinates $0$.
Then if $MY=Y'$, we have that $Y=\frac{1}{d}Y'+\alpha(1)_i$ for some constant $\alpha$.

Assume that some fixed $\bar{v}$ is a local minimum.
It follows that there exists $\alpha$ such that for all $i$:
$$\frac{d-1}{2}\log_dv_i=\mu_i-\mu_{i-1}+i-\frac{d+1}{2}+\alpha.$$

Let $0=k_0<k_1<\dots<k_r=d$ be the sequence of indexes such that $\mu_i=0$ if and only if $i=k_j$ for some $j$.
Fix $0\leq j\leq r$.
As $\mu_i(v_{i+1}-v_i)=0$, we obtain that for all $k_j<i<k_{j+1}$, $v_{i+1}=v_i$.
Therefore for $k_j<i\leq k_{j+1}$, we have $v_i=v_{k_{j+1}}$.
Let $\gamma_j=\frac{d-1}{2}\log_dv_{k_{j+1}}-\alpha$.
Then for $k_j<i\leq k_{j+1}$:

$$\gamma_j=\mu_i-\mu_{i-1}+i-\frac{d+1}{2}.$$

We take the sum of those equations over $i$ with $k_j<i\leq k_{j+1}$ ($j$ is still fixed) and obtain:

$$(k_{j+1}-k_j)\gamma_j=\mu_{k_{j+1}}-\mu_{k_j}+\sum_{i=k_j+1}^{k_{j+1}}\left(i-\frac{d+1}{2}\right).$$

We have $\mu_{k_{j+1}}=\mu_{k_j}=0$ and $\sum_{i=k_j+1}^{k_{j+1}}i=\frac{1}{2}(k_{j+1}-k_j)(k_j+k_{j+1}+1)$ and thus
$$2\gamma_j=k_j+k_{j+1}-d.$$

Then for $k_j<i\leq k_{j+1}$:

$$\log_dv_i=\frac{2\gamma_j+2\alpha}{d-1}=\frac{k_j+k_{j+1}}{d-1}+\alpha'.$$

Denote $\tilde{d}=d^{\frac{1}{d-1}}$ and for $1\leq i\leq d$ let $j(i)$ be the unique number such that $k_{j(i)}<i\leq k_{j(i)+1}$.
Recall that $\sum_{i=1}^dv_i=1$.
Then
$$\sum_{i=1}^d\tilde{d}^{k_{j(i)}+k_{j(i)+1}}d^{\alpha'}=1,$$
so $d^{-\alpha'}=\sum_{i=1}^d\tilde{d}^{k_{j(i)}+k_{j(i)+1}}$ and for all $i=1,2,\dots,d$:
$$v_i=\frac{\tilde{d}^{k_{j(i)}+k_{j(i)+1}}}{\sum_{i'=1}^d\tilde{d}^{k_{j(i')}+k_{j(i')+1}}}.$$

Let $C'=\sum_{i=1}^d\tilde{d}^{k_{j(i)}+k_{j(i)+1}}$.
We calculate
$$C'f(\bar{v})=\frac{1}{2}\sum_{i=1}^d\tilde{d}^{k_{j(i)}+k_{j(i)+1}}(k_{j(i)}+k_{j(i)+1})-\frac{d-1}{2}\sum_{i=1}^d\tilde{d}^{k_{j(i)}+k_{j(i)+1}}\log_dC'+\sum_{i=1}^d\tilde{d}^{k_{j(i)}+k_{j(i)+1}}(d-i).$$

Splitting the sums by $j$, we have
$$\frac{1}{2}\sum_{i=1}^d\tilde{d}^{k_{j(i)}+k_{j(i)+1}}(k_{j(i)}+k_{j(i)+1})=\frac{1}{2}\sum_{j=0}^{r-1}\tilde{d}^{k_j+k_{j+1}}(k_{j+1}^2-k_j^2),$$
$$\frac{d-1}{2}\sum_{i=1}^d\tilde{d}^{k_{j(i)}+k_{j(i)+1}}\log_dC'=\frac{d-1}{2}\sum_{j=0}^{r-1}\tilde{d}^{k_j+k_{j+1}}(k_{j+1}-k_j)\log_dC'$$
and
$$\sum_{i=1}^d\tilde{d}^{k_{j(i)}+k_{j(i)+1}}(d-i)=\sum_{j=0}^{r-1}\tilde{d}^{k_j+k_{j+1}}\frac{(k_{j+1}-k_j)(2d-k_{j+1}-k_j-1)}{2}.$$

Then
$$C'f(\bar{v})=\frac{1}{2}\sum_{j=0}^{r-1}(k_{j+1}-k_j)\tilde{d}^{k_j+k_{j+1}}\left(2d-1-(d-1)\log_dC'\right).$$

Recall that we want to obtain $f(\bar{v})\leq0$.
It is thus equivalent to $\log_dC'\geq\frac{d-1}{2d-1}$.
We write:
$$C'=\sum_{i=1}^d\tilde{d}^{k_{j(i)}+k_{j(i)+1}}=\sum_{j=0}^{r-1}(k_{j+1}-k_j)\tilde{d}^{k_j+k_{j+1}}\geq(k_r-k_{r-1})\tilde{d}^{k_{r-1}+k_r}.$$

If all $v_i$ are equal (and thus $r=1$, $k_0=0$ and $k_1=d$), we obtain $\log_dC'=\frac{d-1}{2d-1}$.
Assume now that $r\geq2$.
We then have $x=k_{r-1}\geq1$ and $C'\geq(d-x)\tilde{d}^{d+x}$.
Deriving this function, we find a local maximum at $x=d-(d-1)\ln d$.
Therefore the minimal value over the given interval can be found at either $x=1$, or $x=d-1$, or both.
Calculating, we find $\log_dC'$ to be strictly larger than $\frac{d-1}{2d-1}$ in both of those points, which completes the proof.
\end{proof}
Lemma~\ref{inversecubal} follows.
\end{proof}

Theorem~\ref{thmlamp}$(1)$ follows from Lemma~\ref{main} and Lemma~\ref{inversecubal}.
By Lemma~\ref{equiv}, Theorem~\ref{thmlamp}$(2)$ follows from Theorem~\ref{thmlamp}$(1)$ (and the unicity of the optimal sets).

\section{Bounds for the Coulhon and Saloff-Coste inequality for the wreath product $\Z\wr D$}\label{csc-const-sect}

We will now prove Proposition~\ref{const}.
Recall its statement:
\begin{prop*}
The wreath product $Z\wr D$ verifies

$$C_{\Z\wr D,S_D}=\frac{\lim\frac{\ln\Fol(n)}{n}}{\lim\frac{\ln V(n)}{n}}=\frac{2\ln d}{\ln(\frac{1}{2}(1+\sqrt{4d-3}))}.$$
\end{prop*}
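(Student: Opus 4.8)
The plan is to combine the exact value of the Følner function from Theorem~\ref{thmmain} with a computation of the exponential volume growth rate, and then to invoke the identity $C_{G,S}=\frac{\liminf\ln\Fol(n)/n}{\lim\ln V(n)/n}$ of \cite{csccollab}. Since $\Fol_{\Z\wr D}(n)=2nd^{2(n-1)}$ for $n\ge d$, we get $\frac{\ln\Fol(n)}{n}=\frac{\ln(2n)}{n}+\frac{2(n-1)}{n}\ln d\to 2\ln d$, so the $\liminf$ is in fact a genuine limit equal to $2\ln d$. It then remains only to show $\lim\frac{\ln V(n)}{n}=\ln\omega$ with $\omega=\tfrac12(1+\sqrt{4d-3})$, after which the displayed formula follows by taking the ratio.

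The heart of the argument is this growth rate, where $d=|D|$. I would first record the word-length formula for $S_D$: since every non-identity element of $D$ is a generator, setting a single lamp costs exactly $1$, so $|(k,f)|_{S_D}=|\supp f|+T(k,\supp f)$, where $T(k,P)$ is the length of a shortest walk on $\Z$ starting at $0$, visiting every point of $P$, and ending at $k$. For the lower bound I count only elements reached by monotone rightward walks: a walk of $N$ steps visits the sites $0,1,\dots,N$, ends at $k=N$, and at each site independently leaves the lamp off or sets it to one of $d-1$ values at a cost of $1$ per lit lamp. Tracking word length by a variable $x$ gives the generating function
\[
g(x)=\sum_{N\ge0}x^N\bigl(1+(d-1)x\bigr)^{N+1}=\frac{1+(d-1)x}{1-x-(d-1)x^2},
\]
whose smallest positive singularity is the root $\rho=\frac{-1+\sqrt{4d-3}}{2(d-1)}$ of $(d-1)x^2+x-1=0$. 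Hence the number of such elements of word length $n$ grows like $\rho^{-n}=\omega^n$ up to polynomial factors, giving $\liminf\frac{\ln V(n)}{n}\ge\ln\omega$.

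For the matching upper bound I would parametrize an arbitrary element $(k,f)$ by the span $W=r-\ell$ of the smallest interval $[\ell,r]\supseteq\supp f\cup\{0,k\}$ and by the number $m=|\supp f|$ of lit lamps. Any walk visiting $\supp f$ must cross the whole span, so $T(k,\supp f)\ge W$ and the word length is at least $W+m$; moreover $[\ell,r]$ contains only $W+1$ sites, so for fixed $W$ and $m$ there are at most $\binom{W+1}{m}(d-1)^m$ configurations, while the auxiliary data $\ell\in[-W,0]$ and $k\in[\ell,r]$ contribute only a factor polynomial in $n$. Summing $\binom{W+1}{m}(d-1)^m$ over $W+m\le n$ is governed by the very same generating function $g$, whence $V(n)\le\mathrm{poly}(n)\,\omega^n$ and $\limsup\frac{\ln V(n)}{n}\le\ln\omega$. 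The main obstacle is precisely this upper bound: one must rule out that letting the cursor backtrack, and thereby reuse sites, could beat monotone walks. This is exactly what the pair of inequalities $T\ge W$ and (number of sites)$=W+1$ encodes, and the clean point is that both the lower and upper estimates are controlled by the same rational generating function $g$, so that no separate optimization over the shape of the walk is required.

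Combining the two bounds yields $\lim\frac{\ln V(n)}{n}=\ln\bigl(\tfrac12(1+\sqrt{4d-3})\bigr)$. Substituting this together with $\lim\frac{\ln\Fol(n)}{n}=2\ln d$ into the identity of \cite{csccollab} gives $C_{\Z\wr D,S_D}=\frac{2\ln d}{\ln(\frac12(1+\sqrt{4d-3}))}$, as claimed.
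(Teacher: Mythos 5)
Your proposal is correct and follows essentially the same route as the paper: both plug the limit $\lim\frac{\ln\Fol(n)}{n}=2\ln d$ (from Theorem~\ref{thmmain}) and the volume growth rate $\omega=\frac{1}{2}(1+\sqrt{4d-3})$ into the identity of~\cite{csccollab}, and both obtain $\omega$ by sandwiching $V(n)$ between the count of monotone (no $t^{-1}$) lamp-setting configurations and a polynomial multiple of that count. Your generating function $g(x)=\frac{1+(d-1)x}{1-x-(d-1)x^2}$ and span-plus-lamp-count parametrization are a repackaging of the paper's normal form $t^mAt^i$ and recurrence $V'(n)=V'(n-1)+(d-1)V'(n-2)$, whose characteristic polynomial is the reciprocal of your denominator.
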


\begin{proof}
We have obtained in Theorem~\ref{thmmain} that $\lim\sqrt[n]{\Fol(n)}=d^2$.
It is therefore sufficient to calculate the exponent of its volume growth for $S_D$.

Consider an element $g$ of length $n$ in the group and let its support be $[m,p]$.
We can write $g=t^mAt^i$ where $A$ is a non-reducible word on $t$ and the different $\delta_i$ (without $t^{-1}$).
It is not hard to see that $A$ contains the letter $t$ exactly $p-m$ times.
Remark that any representation of $g$ in $S_D$ must contain $t$ or $t^{-1}$ at least $p-m$ times.
Additionally, it contains a letter of the form $\delta_i$ at least as many times as $A$ does.
Therefore $|A|\leq|g|=n$.

Notice also that $|m|\leq n$ and $|i|\leq2n$.
Therefore, if we denote by $V'(n)$ the amount of non-reducible words on $t$ and the different $\delta_i$ of length at most $n$, we obtain
$$V'(n)8n^3\geq V(n)\geq V'(n).$$

We now seek to calculate $V'(n)$.
Notice that the only condition on the words counted by this function is to not have two consecutive letters of the form $\delta_i$.
It is not difficult to describe a recurrence relation.
If a word ends in $t$, by removing it we obtain a word of length at most $n-1$.
If it ends in $\delta_i$ for some $i$, the previous letter must be $t$.
By removing both, we obtain a word of length at most $n-2$.
Therefore
$$V'(n)=V'(n-1)+(d-1)V'(n-2),$$
and by calculating the series we obtain
$$\lim\sqrt[n]{V(n)}=\lim\sqrt[n]{V'(n)}=\frac{1+\sqrt{4d-3}}{2}.$$
\end{proof}

It is worth noting that the exact value of the volume growth power series $\sum_nV(n)x^n$ for the standard generating set has been described by Parry~\cite{Parry1992a}.

We then prove Remark~\ref{const2}.
It states:
\begin{remark*}
For the switch-walk-switch generating set $S_{sws}=\{t,\delta,t\delta,\delta t,\delta t\delta\}$ on the lamplighter group $\Z\wr\Z/2\Z$, we have

$$C_{\Z\wr\Z/2\Z,S_{sws}}=\frac{\liminf\frac{\ln\Fol_{sws}(n)}{n}}{\lim\frac{\ln V_{sws}(n)}{n}}\leq2.$$
\end{remark*}
\begin{proof}
The standard F{\o}lner sets verify that $\frac{|\partial'_{out}F_n|}{|F_n|}=\frac{2}{n}$ where by $\partial'_{out}$ we denote the outer boundary with regards to $S_{sws}$.
Therefore $\lim\sqrt[n]{\Fol_{sws}(n)}\leq4$.
Similarly, we now need to calculate the exponent of its volume growth.

Equivalently to the proof of Proposition~\ref{const}, the switch-walk-switch volume is also controlled by a polynomial times the volume growth assuming no multiplication by $t^{-1}$.
It is easy to check that the latter is $n\mapsto2^n$.
Thus $8n^32^n\geq V_{sws}(n)\geq2^n$ and $\lim\sqrt[n]{V_{sws}(n)}=2$.
\end{proof}

\section{The Baumslag-Solitar group $BS(1,2)$}\label{bssect}

We now prove Theorem~\ref{bsthm}.
Remark that for the edge boundary, the contributions by each element of the generating set are always disjoint.
Recall that the vertex set of the associated graph $\Gamma'$ is the dyadic numbers.
For the standard generating set, the edges are the couples $\overrightarrow{(f,g)}$ such that $|f-g|$ is a power of $2$, and are labeled $b$.
For any finite subgraph of $\Gamma'$, up to translation we can assume that its vertices are all integers, in which case edges will be defined by positive powers of $2$.

We introduce the following notations.
Consider a subset $F\subset BS(1,2)$ and let $\widetilde{F}$ be the associated subgraph.
The set of edges $\overrightarrow{(f,g)}\in E(\widetilde{F})$ such that $\overrightarrow{(g,f)}\notin E(\widetilde{F})$ we will denote $L(\widetilde{F})$ (remark that unlike Section~\ref{mainsection}, this is a set of edges rather than vertices).
Let $\overline{\Gamma'}$ be the graph obtained by replacing every couple of edges $\overrightarrow{(f,g)}$ and $\overrightarrow{(g,f)}$ in $\Gamma'$ with one undirected edge.
Let $\overline{\widetilde{F}}$ be the subgraph of $\overline{\Gamma'}$ obtained by taking the images of $E(\widetilde{F})\setminus L(\widetilde{F})$ and all adjacent vertices.

For any $i\in\Z$, consider the transformation $x\mapsto x+2^i$ which is well defined on the vertices of $\overline{\Gamma'}$.
Denote by $o(\overline{\widetilde{F}})$ the total amount of distinct non-trivial orbits in $V(\overline{\widetilde{F}})$ over all $i$.
Remark that this quantity is well defined for any subgraph $\overline{\widetilde{F}}$ of $\overline{\Gamma'}$.

\begin{lemma}\label{comparebs}
Consider a subset $F\subset BS(1,2)$ with $\frac{|\partial_{edge}F|}{|F|}\leq1$.
Then

$$|F|\geq|E(\overline{\widetilde{F}})|+o(\overline{\widetilde{F}})$$
and
$$\frac{|\partial_{edge}F|}{|F|}\geq2\frac{|V(\overline{\widetilde{F}})|+o(\overline{\widetilde{F}})}{|E(\overline{\widetilde{F}})|+o(\overline{\widetilde{F}})}.$$
\end{lemma}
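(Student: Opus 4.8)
The plan is to reduce everything to an exact count of the two families of Cayley edges. Writing elements of $BS(1,2)$ as $(k,f)$ with $k\in\Z$ and $f$ a dyadic number, the generator $a$ connects $(k,f)$ to $(k-1,f)$ (a \emph{vertical} edge, internal to the fiber over $f$), while $b$ connects $(k,f)$ to $(k,f+2^k)$ (a \emph{horizontal} edge, internal to the level $k$); as noted at the start of the section these two families are disjoint, so $|\partial_{edge}F|$ splits as the sum of their contributions. For fixed $f$ set $K_f=\{k:(k,f)\in F\}\subset\Z$ and let $A$ be the total number of maximal runs of consecutive integers in the $K_f$, summed over all $f$; since each maximal run contributes exactly two vertical boundary edges (one above, one below), the vertical contribution is exactly $2A$. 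Dually, grouping the level-$i$ elements of $F$ by their coset modulo $2^i$ gives subsets $I_{i,c}\subset\Z$, and, letting $B$ be the total number of maximal runs in all the $I_{i,c}$, the horizontal contribution is exactly $2B$; moreover each horizontal boundary edge corresponds to a unique one-directional edge, so $2B=|L(\widetilde F)|$. Hence $|\partial_{edge}F|=2A+2B$.

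Next I would record the identities tying these counts to $\overline{\widetilde F}$. A maximal run of length $\ell$ inside some $I_{i,c}$ uses $\ell$ elements of $F$ and produces exactly $\ell-1$ bidirectional (undirected) edges of $\overline{\widetilde F}$ at level $i$; since every edge of $\overline{\widetilde F}$ has a unique level, summing over all runs gives the key identity $|F|=|E(\overline{\widetilde F})|+B$. Writing $V=|V(\overline{\widetilde F})|$, $E=|E(\overline{\widetilde F})|$ and $o=o(\overline{\widetilde F})$, I then invoke the two elementary bounds $A\geq V$ and $B\geq o$. The first holds because every vertex of $\overline{\widetilde F}$ has a non-empty fiber and each non-empty fiber contributes at least one run to $A$. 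The second holds because a non-trivial orbit at level $i$ is exactly a maximal run of length $\geq2$ in some $I_{i,c}$, and these form a subset of all the runs counted by $B$. The first assertion of the lemma is then immediate: $|F|=E+B\geq E+o$.

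For the second assertion, write $\frac{|\partial_{edge}F|}{|F|}=\frac{2(A+B)}{E+B}$. The hypothesis $\frac{|\partial_{edge}F|}{|F|}\leq1$ is equivalent to $2A+B\leq E$, so in particular $E\geq2A\geq2V\geq V$. Because $E\geq V$, the map $x\mapsto\frac{V+x}{E+x}$ is non-decreasing on $[0,\infty)$, so combining $A\geq V$ (which only enlarges the numerator) with $B\geq o$ gives
\[
\frac{2(A+B)}{E+B}\geq\frac{2(V+B)}{E+B}\geq\frac{2(V+o)}{E+o},
\]
which is exactly the stated inequality.

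I expect the real work to sit in the bookkeeping rather than in this final algebra. One must verify carefully that a ``non-trivial orbit'' at level $i$ coincides with a run of length at least two (this is what keeps $o$ finite and yields $B\geq o$), and that the vertical and horizontal boundary counts are genuinely exact, so that $|\partial_{edge}F|=2(A+B)$ and $|F|=E+B$ with no over- or under-counting across levels. The conceptual crux is recognizing that the hypothesis $\frac{|\partial_{edge}F|}{|F|}\leq1$ is precisely what forces $E\geq V$: without it one could have $E<V$ (a ``forest-like'' subgraph), the monotonicity of $x\mapsto\frac{V+x}{E+x}$ would reverse, and the second inequality could fail. Finally, equality throughout forces $A=V$ and $B=o$, which is exactly the situation of the standard box sets $F_n$ (every fiber a single run, every horizontal run of length $\geq2$); this is what makes Lemma~\ref{comparebs} the right tool for reducing Theorem~\ref{bsthm} to minimizing $\frac{2(V+o)}{E+o}$ over subgraphs of $\overline{\Gamma'}$.
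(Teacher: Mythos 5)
Your proof is correct and takes essentially the same route as the paper's: your exact counts $|F|=|E(\overline{\widetilde F})|+B$ and $|\partial_{edge}F|=2(A+B)$ with $2B=|L(\widetilde F)|$, together with the bounds $A\geq|V(\overline{\widetilde F})|$ and $B\geq o(\overline{\widetilde F})$, are precisely the paper's estimates $2|F|=|E(\widetilde F)|=|L(\widetilde F)|+2|E(\overline{\widetilde F})|$, $|\partial_{edge}^aF|\geq2|V(\overline{\widetilde F})|$, $|\partial_{edge}^bF|=|L(\widetilde F)|$ and $|L(\widetilde F)|\geq2o(\overline{\widetilde F})$, followed by the same final algebra. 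The two points you flag are genuine but minor refinements rather than departures: the monotonicity of $x\mapsto\frac{|V(\overline{\widetilde F})|+x}{|E(\overline{\widetilde F})|+x}$ needs $|E(\overline{\widetilde F})|\geq|V(\overline{\widetilde F})|$, which you correctly extract from the hypothesis $\frac{|\partial_{edge}F|}{|F|}\leq1$ while the paper uses it silently in its last displayed inequality, and your reading of a non-trivial orbit as a maximal run of length at least two (determined by edges of $\overline{\widetilde F}$, not merely vertices) is exactly the reading the paper's own proof relies on.
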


\begin{proof}
We start by estimating $|F|$.
By definition of associated subgraph, we have
$$2|F|=|E(\widetilde{F})|.$$
By definition of $L(\widetilde{F})$ and $\overline{\widetilde{F}}$ we have
$$|E(\widetilde{F})=|L(\widetilde{F})|+2|E(\overline{\widetilde{F}})|.$$
We now need to prove that $|L(\widetilde{F})|\geq2o(\overline{\widetilde{F}})$.
Indeed, consider any non-trivial orbit and let $f$ and $g$ be respectively its smallest and largest elements.
If the step of the orbit is $2^i$, we have that $(i,f)$ and $(i,g)$ are elements of $F$.
Then $\overrightarrow{(f-2^i,f)}$ and $\overrightarrow{(g,g+2^i)}$ must both be edges in $\widetilde{F}$.
As $f-2^i$ and $g+2^i$ are not in $\overline{\widetilde{F}}$, those two edges are then in $L(\widetilde{F})$.

We now estimate the boundary.
We will denote by $\partial_{edge}^aF$ and $\partial_{edge}^bF$ the edges in $\partial_{edge}F$ labeled $a$ and $b$ respectively.
It follows directly from the definition of associated graph and $L(\widetilde{F})$ that
$$|\partial_{edge}^bF|=|L(\widetilde{F})|.$$

Consider a vertex $f\in V(\overline{\widetilde{F}})$.
By definition of $\overline{\widetilde{F}}$, there is $g$ such that $\overrightarrow{(f,g)}\in E(\widetilde{F})$.
It follows that there is $x$ such that $(x,f)\in F$.
There are thus at least two elements of $\partial_{edge}^aF$ such that their corresponding configuration is $f$.
We obtain:
$$|\partial_{edge}^aF|\geq2|V(\overline{\widetilde{F}})|.$$
Then:
$$\frac{|\partial_{edge}F|}{|F|}\geq2\frac{|V(\overline{\widetilde{F}})|+\frac{1}{2}|L(\widetilde{F})|}{|E(\overline{\widetilde{F}})|+\frac{1}{2}|L(\widetilde{F})|}\geq2\frac{|V(\overline{\widetilde{F}})|+o(\overline{\widetilde{F}})}{|E(\overline{\widetilde{F}})|+o(\overline{\widetilde{F}})}.$$
\end{proof}

We now seek to understand subgraphs of $\overline{\Gamma'}$ that optimize this quotient.
We claim that the subgraphs with vertices $[\![1,n]\!]$ do so.
We will denote $e(n)$ (respectively $o(n)$) the amount of edges (respectively orbits) in the subgraph with vertices $[\![1,n]\!]$ and all induced edges.
We prove results similar to Theorem~\ref{thmharper}, which we cited from Haprer's book~\cite{Harper2004}.

\begin{lemma}\label{baums}
Let $G$ be a subgraph of $\overline{\Gamma'}$ with $n$ vertices.
Then

$$|E(G)|\leq e(n).$$
\end{lemma}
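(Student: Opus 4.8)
The plan is to prove the inequality by induction on $n$, exploiting the self-similarity of $\overline{\Gamma'}$ under division by $2$. Since $G$ is finite, after translating and multiplying all vertices by a suitable power of $2$ we may assume $V(G)\subset\Z$; moreover, as long as every vertex is even we may divide all vertices by $2$, obtaining an isomorphic graph (multiplication by $2$ sends powers of $2$ to powers of $2$). We may therefore assume that $V(G)$ contains vertices of both parities. Write $V_0=V(G)\cap 2\Z$ and $V_1=V(G)\cap(2\Z+1)$, and set $m_0=|V_0|$, $m_1=|V_1|$, so that $m_0+m_1=n$ and $1\le m_0,m_1\le n-1$.

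The key is a decomposition of the edges by parity. An edge of $\overline{\Gamma'}$ joins $f$ and $g$ exactly when $|f-g|=2^i$ for some $i\ge0$; if the endpoints share a parity then $|f-g|$ is even, forcing $i\ge1$, and if they have opposite parities then $|f-g|$ is odd, forcing $|f-g|=1$. Hence $E(G)$ splits into the edges internal to $V_0$, those internal to $V_1$, and the set $B$ of distance-$1$ (cross-parity) edges. The maps $f\mapsto f/2$ on $V_0$ and $f\mapsto(f-1)/2$ on $V_1$ are isomorphisms onto subgraphs of $\overline{\Gamma'}$ with $m_0$ and $m_1$ vertices (a distance $2^i$, $i\ge1$, becomes $2^{i-1}$), so by the induction hypothesis the internal edges number at most $e(m_0)$ and $e(m_1)$, giving $|E(G)|\le e(m_0)+e(m_1)+|B|$.

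Next I would bound $B$ in two ways. The distance-$1$ edges form a subgraph of the integer path, so $|B|\le n-1$; and since each such edge has exactly one endpoint in the smaller parity class while each vertex lies in at most two of them, $|B|\le 2\min(m_0,m_1)$. Writing $k=\min(m_0,m_1)\in\{1,\dots,\lfloor n/2\rfloor\}$, it remains to prove $e(k)+e(n-k)+\min(n-1,2k)\le e(n)$. I would first record the interval recursion $e(n)=e(\lfloor n/2\rfloor)+e(\lceil n/2\rceil)+(n-1)$, valid because the even and odd parts of $[\![1,n]\!]$ rescale to $[\![1,\lfloor n/2\rfloor]\!]$ and $[\![1,\lceil n/2\rceil]\!]$ and contribute exactly $n-1$ consecutive pairs; this identifies $e(n)$ with the target expression at the balanced split. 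For $n$ even and $k=n/2$ the inequality is this identity, and in every remaining case $2k\le n-1$, reducing the claim to $\psi(k)\le e(n)$ with $\psi(k)=e(k)+e(n-k)+2k$.

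Finally, the function $e$ is convex, since its increments $e(m)-e(m-1)=\#\{i\ge0:2^i<m\}$ are non-decreasing; hence $\psi$ is convex too, the linear term $2k$ not affecting second differences. A convex function on an interval attains its maximum at an endpoint, so it suffices to evaluate $\psi$ at $k=0$, where $\psi(0)=e(n)$, and at the largest admissible $k$, where the recursion together with the fact that successive increments of $e$ differ by at most $1$ yields $\psi\le e(n)$; thus $\psi(k)\le e(n)$ throughout, closing the induction. The main obstacle is precisely this last trade-off: bounding the two parity classes and the cross edges separately is too lossy, because convexity of $e$ favours unbalanced splits; the argument only closes because the cross-edge count is itself controlled by $2\min(m_0,m_1)$, making the combined quantity convex with both relevant endpoints equal to $e(n)$.
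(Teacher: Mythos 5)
Your proof is correct, and its skeleton coincides with the paper's: induction on $n$ after normalizing so that both parities occur among the (integer) vertices, the parity split $V_0\sqcup V_1$ with internal edges bounded by $e(m_0)+e(m_1)$ via the halving isomorphisms, and the cross-edge bound $\min(n-1,2\min(m_0,m_1))$ --- the paper states this same refinement as $2\min(n_1,n_2)-\varepsilon$ with $\varepsilon=1$ exactly when $n_1=n_2$, which is the case where your path bound $n-1$ beats $2k$. Where you genuinely diverge is in how the closing numerical inequality $e(k)+e(n-k)+\min(n-1,2k)\le e(n)$ is verified. The paper does it by brute force: it substitutes the closed formula $e(n)=kn-2^k+1$ (for $2^{k-1}<n\le 2^k$) and runs a sign-by-sign case analysis on $\delta=k-k_2\in\{0,1\}$ and on $k_1$, showing the resulting quantity $A$ is nonpositive. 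You instead isolate the self-similarity identity $e(n)=e(\lfloor n/2\rfloor)+e(\lceil n/2\rceil)+(n-1)$, peel off the balanced case $2k=n$ (where this identity gives equality), and then treat $\psi(k)=e(k)+e(n-k)+2k$ on the integer interval $[0,\lfloor(n-1)/2\rfloor]$ by convexity: the increments $e(m)-e(m-1)=\#\{i\ge 0:2^i<m\}$ are non-decreasing, so $\psi$ is convex and is maximized at an endpoint; $\psi(0)=e(n)$, and at $k=\lfloor(n-1)/2\rfloor$ one gets exactly $e(n)$ by the recursion when $n$ is odd, and at most $e(n)$ when $n$ is even because consecutive increments of $e$ differ by at most $1$ (this last case reduces to $e(n/2-1)+e(n/2+1)\le 2e(n/2)+1$, which I checked). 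Your route buys a conceptual explanation of why the tight cases are the degenerate and the balanced splits, with convexity interpolating between them, whereas the paper's verification is elementary but opaque; the small prices are that you must keep the balanced case outside the convexity argument (since $\psi(n/2)=e(n)+1$ there, the min is essential) and that your normalization step should say explicitly that an all-odd vertex set is first translated by $1$ before halving --- a harmless informality, handled in the paper by the map $\psi(f)=\frac{f-1}{2^i}+1$.
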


\begin{proof}
We start by estimating $e(n)$.
For $n\in\N$ with $2^{k-1}<n\leq2^k$ we have that $e(n)$ is the sum over $i$ of the amount of couples of elements of $[\![1,n]\!]$ with difference $2^i$.
In other words,
$$e(n)=\sum_{i=0}^{k-1}(n-2^i)=kn-2^k+1.$$

We will now prove the result of the lemma by induction on $n$.
The base $n=1$ is trivial.
Fix a subgraph $G$ with $n$ vertices.
Up to translation by an integer we can assume that the vertices of $G$ are integer and the smallest is $1$.

Assume first that all vertices of $G$ are odd integers.
Let $i$ be the largest integer such that $2^i$ divides all elements of $\{f-1:f\in V(G)\}$.
Define $\psi(f)=\frac{f-1}{2^i}+1$.
Then the set $\psi(V(G))$ is a set of integers with the smallest element being $1$.
Furthermore, for any $f$ and $f'$ divisible by $2^i$, $|f-f'|$ is a power of $2$ if and only if $|\psi(f)-\psi(f')|$ is.
Without loss of generality we can replace $G$ by the subgraph of $\overline{\Gamma'}$ with vertices $\psi(V(G))$.

We can therefore assume $G$ has both even and odd elements.
Let $G_1$ be the subgraph induced by vertices of $G$ that are odd integers, and $G_2$ the subgraph induced by even integers.
Denote the cardinal of their vertex sets by $n_1$ and $n_2$ respectively.
Notice that $n=n_1+n_2$.

As the difference (in terms of integer values) between vertices of $G_1$ and $G_2$ is always odd, there can be an edge if and only if the difference is $1$.
Then by induction hypothesis the number of edges in $F$ is not greater than
$$e(n_1)+e(n_2)+2\min(n_1,n_2)-\varepsilon$$
where $\varepsilon=1$ if $n_1$ and $n_2$ are equal, and $0$ otherwise.

Let us denote $2^{k_1-1}<n_1\leq2^{k_1}$, $2^{k_2-1}<n_2\leq2^{k_2}$.
Without loss of generality, assume $n_1\leq n_2$.
Then $k-1\leq k_2\leq k$.
Let $\delta=k-k_2$.
We calculate

\begin{align*}
& |E(G)|-e(n)\leq e(n_1)+e(n_2)+2n_1-\varepsilon-e(n) \\
& =k_1n_1-2^{k_1}+1+k_2n_2-2^{k_2}+1+2n_1-\varepsilon-k(n_1+n_2)+2^k-1 \\
& =(k_1-k+2)n_1-\delta n_2+1+2^k-2^{k_1}-2^{k_2}-\varepsilon=A.
\end{align*}

We seek to prove that $A\leq0$.
We have $\delta=0$ or $1$.
First, assume $\delta=0$.
Then $2^k=2^{k_2}$ and
$$A=(k_1+2-k)n_1+1-2^{k_1}-\varepsilon.$$
As $k_1\leq k-1$, we have $A\leq n_1+1-2^{k_1}-\varepsilon\leq 0$.

Assume now $\delta=1$.
Then
$$A=(k_1+2-k)n_1-n_2+1+2^{k-1}-2^{k_1}-\varepsilon.$$

Assume first that $k_1\leq k-2$.
Then $A\leq 2^{k_2}-n_2+1-\varepsilon\leq0$.

As $k_1\leq k_2=k-1$, the only case left is $k_1=k-1$.
Then
$$A=n_1-n_2+1-\varepsilon.$$
If $n_1-n_2=0$, then $\varepsilon=1$ and $A=0$.
If $n_1-n_2\leq-1$, then $A\leq-\varepsilon\leq0$.
\end{proof}

\begin{lemma}\label{baums2}
Let $G$ be a subgraph of $\overline{\Gamma'}$ such that $|V(G)|+o(G)=2n-1$ or $2n$.
Then

$$|E(G)|\leq e(n).$$
\end{lemma}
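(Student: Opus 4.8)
The plan is to deduce Lemma~\ref{baums2} from Lemma~\ref{baums} by a contrapositive argument, after isolating two auxiliary facts: a lower bound on the orbit count of a connected subgraph, and a superadditivity estimate for the function $e$. First I record a reduction: since $o(G)$ depends only on the vertex set $V(G)$ (it counts the nontrivial classes of $V(G)$ under the relations $x\sim x+2^i$), adjoining to $G$ all edges of $\overline{\Gamma'}$ induced on $V(G)$ leaves $|V(G)|$ and $o(G)$ unchanged while only increasing $|E(G)|$; hence I may assume $G$ carries all its induced edges, and (as in Lemma~\ref{baums}) that $V(G)\subset\Z$.

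The heart of the matter is the inequality
\[
o(G)\geq |V(G)|-1 \qquad\text{for every connected } G.
\]
I would prove this by induction on $|V(G)|$, deleting the largest vertex $w=\max V(G)$. Adding $w$ back at the top can only extend existing orbits or create new two-element orbits (it can never merge two orbits, since $w+2^i\notin V(G)$), so $o(G)=o(G-w)+\Delta$, where $\Delta$ is the number of steps $2^i$ with $w-2^i\in V(G)$ and $w-2^{i+1}\notin V(G)$. If $G-w$ has connected components $K_1,\dots,K_t$, the induction hypothesis gives $o(G-w)=\sum_j o(K_j)\geq |V(G-w)|-t$, so it suffices to show $\Delta\geq t$. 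For this, pick for each $K_j$ the largest step $s_j$ with $w-s_j\in K_j$; then $w-2s_j\notin V(G)$, for otherwise $w-2s_j$ would be a neighbour of $w$ along the step $2s_j$ (again a power of two) lying in $K_j$, contradicting maximality of $s_j$. The $s_j$ are pairwise distinct, so each contributes to $\Delta$, giving $\Delta\geq t$ and hence the claim.

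With this in hand the connected case of Lemma~\ref{baums2} is immediate: if $G$ is connected with $|V(G)|+o(G)\leq 2n$ but $|E(G)|>e(n)$, then Lemma~\ref{baums} forces $e(|V(G)|)\geq |E(G)|>e(n)$, so $|V(G)|\geq n+1$; combined with $o(G)\geq |V(G)|-1$ this yields $|V(G)|+o(G)\geq 2n+1$, a contradiction. For a general $G$ I would treat the components $K_1,\dots,K_c$ separately: writing $B_j=|V(K_j)|+o(K_j)$ and $n_j=\lceil B_j/2\rceil$, the connected case gives $|E(K_j)|\leq e(n_j)$. It then remains to combine these, for which I would use the elementary superadditivity estimate $e(a)+e(b)\leq e(a+b-1)$ (proved by a direct case analysis on the dyadic ranges of $a$, $b$, $a+b-1$); applying it $c-1$ times gives $\sum_j e(n_j)\leq e\big(\sum_j n_j-c+1\big)$, and the budget bound $\sum_j(2n_j-1)\leq |V(G)|+o(G)\leq 2n$ forces $\sum_j n_j-c+1\leq n$, whence $|E(G)|=\sum_j|E(K_j)|\leq e(n)$ by monotonicity of $e$.

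The main obstacle is the orbit inequality $o(G)\geq|V(G)|-1$: the subtlety is that $\overline{\Gamma'}$ is far from a tree (the interval $[\![1,n]\!]$ already contains many cycles, and in fact $o(G)$ can strictly exceed $|V(G)|-1$ once genuine cycles appear), so one cannot simply read off the bound from a spanning forest. The doubling observation—that a neighbour of $w$ at the largest relevant step cannot have its orbit continue downward without producing a neighbour at an even larger step—is exactly what makes the induction close, and verifying it carefully, together with the bookkeeping of how orbits change under deletion of the top vertex, is where the real work lies.
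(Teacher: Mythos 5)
Your proof is correct, but it takes a genuinely different route from the paper's. The paper proves Lemma~\ref{baums2} by an induction on $n$ that mirrors the proof of Lemma~\ref{baums}: normalize so the vertices are integers with minimum $1$, split $G$ into the subgraphs $G_1,G_2$ carried by odd and even vertices, set $n_i=\lceil(|V(G_i)|+o(G_i))/2\rceil$, and distinguish two cases; if some edge joins $G_1$ to $G_2$, the resulting step-$1$ orbit gives $o(G)\geq o(G_1)+o(G_2)+1$, hence $n_1+n_2\leq n$ and the edge count is absorbed by the computation already done in Lemma~\ref{baums}, while if there is no such edge, $n_1+n_2\leq n+1$ and an explicit case analysis on the dyadic ranges ($k_2=k$, $k_2=k-1$, $k_2\leq k-2$) shows $e(n_1)+e(n_2)\leq e(n)$. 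You instead isolate the structural inequality $o(G)\geq|V(G)|-1$ for connected induced subgraphs, proved by deleting the maximal vertex, where the doubling observation ($w-2s_j\notin V(G)$ by maximality of $s_j$, since $w-2s_j$ would be adjacent to $w-s_j$ and hence lie in $K_j$) supplies exactly the $\Delta\geq t$ needed to close the induction; note this inequality is sharp, by the paper's remark that $o(n)=n-1$. The connected case then falls out of Lemma~\ref{baums} in two lines (the budget $|V(G)|+o(G)\leq 2n$ caps a connected piece at $n$ vertices), and disconnected graphs are handled by per-component bounds plus the superadditivity $e(a)+e(b)\leq e(a+b-1)$, which is easily verified from the increment formula $e(m+1)-e(m)=\lceil\log_2(m+1)\rceil$ and which replaces the paper's explicit arithmetic. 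Your approach buys a reusable, conceptually transparent fact and treats Lemma~\ref{baums} as a black box; the paper's approach needs no reduction to induced subgraphs and stays structurally parallel to the proof of Lemma~\ref{baums}. One point you should make explicit: your opening reduction (adjoining all induced edges) is legitimate precisely because the paper defines $o$ through the translations $x\mapsto x+2^i$ acting on the vertex set alone, so that $o$ is unchanged when edges are added; this vertex-based reading of $o$ is also what makes the lemma true for non-induced subgraphs (if orbits required edges of $G$, the path on $\{1,2,3\}$ with its two step-$1$ edges would already violate the statement), so your reliance on it is sound, but it is the one load-bearing interpretive step in your argument and deserves a sentence of justification.
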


Remark that $o(n)=n-1$.
Indeed, for each non-trivial orbit, consider the second smallest (in terms of the natural order on the integers) element of the orbit.
If it has step $2^i$, that element is between $1+2^i$ and $2^{i+1}$.
Therefore these elements are disjoint, and cover every element in $[\![1,n]\!]$ except the element $1$.

\begin{proof}
We will now prove the result of the lemma by induction on $n$.
The base $n=1$ is trivial.
Fix a subgraph $G$ such that $|V(G)|+o(F)=2n-1$ or $2n$.
Up to translation by an integer we can assume that the vertices of $G$ are integer and the smallest is $1$.
As in Lemma~\ref{baums}, we can assume that $G$ has even vertices, and split it into $G_1$ and $G_2$ based on the parity of the vertices.

Let $n_1=\lceil\frac{|V(G_1)|+o(G_1)}{2}\rceil$ and $n_2=\lceil\frac{|V(G_2)|+o(G_2)}{2}\rceil$.
Let $2^{k_1-1}<n_1\leq2^k_1$ and $2^{k_2-1}<n_2\leq2^k_2$.
Without loss of generality, let $n_1\leq n_2$.

Consider first the case where there is at least one edge between $G_1$ and $G_2$.
Then $o(G)\geq o(G_1)+o(G_2)+1$ and $2n-1\geq|V(G)|+o(G)-1\geq|V(G_1)|+o(G_1)+|V(G_2)|+o(G_2)$.
This implies that $n_1+n_2\leq n$.
The result then follows from the proof of Lemma~\ref{baums}.

Consider now the case where there is no edge between $G_1$ and $G_2$.
Then $o(G)\geq o(G_1)+o(G_2)$ and $2n\geq|V(G)|+o(G)\geq|V(G_1)|+o(G_1)+|V(G_2)|+o(G_2)$.
This implies that $n_1+n_2\leq n+1$.
By induction hypothesis we have
$$|E(G)|=|E(G_1)|+|E(G_2)|\leq e(n_1)+e(n_2).$$
We calculate
$$e(n_1)+e(n_2)-e(n)=k_1n_1+k_2n_2-kn+2^k-2^{k_1}-2^{k_2}+1=A.$$
Assume first that $k_2=k$.
Then
$$A\leq(k_1-k)n_1-2^{k_1}+1+k=(k_1-k)(n_1-1)-2^{k_1}+k_1+1\leq0$$
as $n_1\geq1$.

Assume now that $k_2\leq k-2$.
However, in that case $n_1+n_2\leq2^{k-1}<n$, and the result follows from the proof of Lemma~\ref{baums}.

Assume that $k_2=k-1$.
Then
\begin{equation*}
\begin{split}
A&=(k_1+1-k)n_1-n+2^{k-1}-2^{k_1}+k\\
&=(k_1+1-k)(n_1-1)-(n-2^{k-1})-(2^{k_1}-k_1-1)\leq0,
\end{split}
\end{equation*}
as $n_1\geq1$ and $k_1\leq k_2=k-1$.
\end{proof}

Combining those two lemma, we obtain:

\begin{cor}
Let $G$ be a subgraph of $\overline{\Gamma'}$ such that $|V(G)|+o(G)\leq2n$ and $\frac{|V(G)|+o(G)}{|E(G)|+o(G)}\leq1$.
Then

$$\frac{|V(G)|+o(G)}{|E(G)|+o(G)}\geq\frac{n+o(n)}{e(n)+o(n)}.$$
\end{cor}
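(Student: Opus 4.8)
The plan is to combine Lemmas~\ref{baums} and~\ref{baums2} with a simple monotonicity argument on the quantity $\frac{|V(G)|+o(G)}{|E(G)|+o(G)}$. First I would recall from Lemma~\ref{baums2} that the subgraph on $[\![1,n]\!]$ has $|V|+o = n + (n-1) = 2n-1$ (using $o(n)=n-1$ as noted after the statement), so the target quotient on the right-hand side is exactly $\frac{n+o(n)}{e(n)+o(n)}$, the value attained by the standard set. The goal is thus to show no competitor $G$ with $|V(G)|+o(G)\leq 2n$ can do strictly better while respecting the normalization $\frac{|V(G)|+o(G)}{|E(G)|+o(G)}\leq 1$.

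The key observation is that the two lemmas bound $|E(G)|$ from above in terms of the combined quantity $|V(G)|+o(G)$: Lemma~\ref{baums2} says that whenever $|V(G)|+o(G)\in\{2m-1,2m\}$ we have $|E(G)|\leq e(m)$. So if I set $m$ to be the integer with $|V(G)|+o(G)\in\{2m-1,2m\}$, then $m\leq n$ (since $|V(G)|+o(G)\leq 2n$), and $|E(G)|\leq e(m)$. The plan is then to write
$$\frac{|V(G)|+o(G)}{|E(G)|+o(G)}\geq\frac{|V(G)|+o(G)}{e(m)+o(G)},$$
and observe that because $|V(G)|+o(G)\geq 2m-1$ while $o(G)$ appears identically in numerator and denominator, I can push the estimate toward the extremal configuration. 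The cleanest route is to note that among all values of $|V(G)|+o(G)$ and $o(G)$ compatible with the two lemmas, the quotient is minimized at the standard set, where $|V|+o=2n-1$ and $|E|=e(n)$.

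More concretely, I would argue as follows. Using $|E(G)|\leq e(m)$ and the fact that the map $x\mapsto \frac{x+c}{y+c}$ is monotone in the shared additive constant $c=o(G)$ under the constraint $\frac{|V(G)|+o(G)}{|E(G)|+o(G)}\leq 1$ (which forces $|E(G)|\geq|V(G)|$, so the numerator never exceeds the denominator), increasing the denominator by enlarging $|E(G)|$ to its maximum $e(m)$ only decreases the quotient, hence
$$\frac{|V(G)|+o(G)}{|E(G)|+o(G)}\geq\frac{2m-1+o(G)}{e(m)+o(G)}.$$
Since this last expression, viewed as a function of the free parameter $o(G)\geq 0$, is minimized (under the normalization) at the value realized by the standard interval — where $o(G)=o(m)=m-1$ and the numerator equals $m+o(m)$ — I would reduce to comparing $\frac{m+o(m)}{e(m)+o(m)}$ across $m\leq n$ and check it is nonincreasing in $m$, so that $m\leq n$ yields the desired bound with $n$ on the right.

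The step I expect to be the main obstacle is controlling the interaction between the free constant $o(G)$ and the bound on $m$: the inequality $|V(G)|+o(G)\leq 2n$ constrains the \emph{sum}, but the lemmas bound $|E(G)|$ via the integer $m$, so I must verify carefully that replacing $(|V(G)|+o(G),|E(G)|)$ by the extremal pair $(2m-1,e(m))$ genuinely decreases the quotient rather than requiring an extra case analysis when $|V(G)|+o(G)=2m$ (the even case) versus $2m-1$. In that even case the numerator is one larger, and I would need to confirm that the corresponding bound $|E(G)|\leq e(m)$ still suffices, using that $e(m)$ grows faster than linearly enough that the single extra unit in the numerator cannot spoil the inequality. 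Handling this parity bookkeeping, together with verifying monotonicity of $\frac{m+o(m)}{e(m)+o(m)}=\frac{2m-1}{e(m)+m-1}$ in $m$, is where the real work lies; everything else is direct substitution from the two preceding lemmas.
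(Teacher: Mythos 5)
Your key displayed inequality is false. From $|V(G)|+o(G)\geq 2m-1$ you may replace the \emph{whole} numerator $|V(G)|+o(G)$ by the lower bound $2m-1$, which gives
$$\frac{|V(G)|+o(G)}{|E(G)|+o(G)}\geq\frac{2m-1}{e(m)+o(G)},$$
but your version $\frac{2m-1+o(G)}{e(m)+o(G)}$ adds $o(G)$ on top of $2m-1$, which amounts to assuming $|V(G)|\geq 2m-1$; since $|V(G)|+o(G)\leq 2m$, that fails as soon as $o(G)\geq2$. Concretely, take $V(G)=\{0,1,4,5\}$: then $|E(G)|=4$ (the pairs $\{0,1\},\{4,5\},\{0,4\},\{1,5\}$) and $o(G)=4$, so $|V(G)|+o(G)=8$ and $m=4$; the true quotient is $\frac{8}{8}=1$, while your right-hand side is $\frac{7+4}{e(4)+4}=\frac{11}{9}>1$. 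The follow-up claim that $\frac{2m-1+o(G)}{e(m)+o(G)}$ is minimized at $o(G)=o(m)=m-1$ is also wrong: that expression is monotone in $o(G)$, so it has no interior minimum.

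The deeper problem, which no parity or monotonicity bookkeeping can repair, is that an argument using only Lemma~\ref{baums2} cannot prove the corollary. The correct bound it yields, $\frac{2m-1}{e(m)+o(G)}$, decreases as $o(G)$ grows and drops strictly below the target $\frac{m+o(m)}{e(m)+o(m)}$ as soon as $o(G)>o(m)$: in the example above it equals $\frac{7}{9}$, below the required $\frac{7}{8}$ (the target for $n=4$), even though the true quotient is $1$. The regime of many orbits and few vertices is exactly where Lemma~\ref{baums} is indispensable, and this is the heart of the paper's proof: it splits into the case $o(G)\leq o(n)$, handled via Lemma~\ref{baums2} roughly as you propose, and the case $o(G)>o(n)$, where $|V(G)|\leq 2n-o(G)\leq n$, so that Lemma~\ref{baums} gives the much stronger bound $|E(G)|\leq e(|V(G)|)$. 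Writing $n'=|V(G)|$ and $c=o(G)-o(n')\geq0$, the paper then uses $\frac{x+c}{y+c}\geq\frac{x}{y}$ (valid for $0<x\leq y$, $c\geq0$) to cancel the excess orbits from numerator and denominator \emph{simultaneously}:
$$\frac{|V(G)|+o(G)}{|E(G)|+o(G)}\geq\frac{n'+o(n')+c}{e(n')+o(n')+c}\geq\frac{n'+o(n')}{e(n')+o(n')}\geq\frac{n+o(n)}{e(n)+o(n)},$$
the last step being the monotonicity in $m$ of $\frac{m+o(m)}{e(m)+o(m)}$ that you correctly flagged as needing verification. Your proposal cites Lemma~\ref{baums} at the outset but never actually invokes it; without it, and without the cancellation maneuver above (which keeps $|V(G)|+o(G)$ intact rather than lower-bounding the numerator while leaving all of $o(G)$ in the denominator), the large-$o(G)$ case is out of reach.
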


Remark that $n+o(n)=2n-1$.

\begin{proof}
If $o(G)\leq o(n)$, the result follows trivially from Lemma~\ref{baums2}.
Assume then that $o(G)>o(n)=n-1$.

Thus $|V(G)|=n'\leq n$.
From Lemma~\ref{baums} we obtain $|E(G)|\leq e(n')$.
We have:

$$\frac{|V(G)|+o(G)}{|E(G)|+o(G)}\geq\frac{n'+o(n')+(o(G)-o(n'))}{e(n')+o(n')+(o(G)-o(n'))}\geq\frac{n'+o(n')}{e(n')+o(n')}\geq\frac{n+o(n)}{e(n)+o(n)}.$$
\end{proof}

The large inequality of Theorem~\ref{bsthm} follows directly from this corollary and Lemma~\ref{comparebs}.
The strict inequality is obtained by noticing that if $|V(\overline{\widetilde{F}})|+o(\overline{\widetilde{F}})<n+o(n)$, then $|V(\overline{\widetilde{F}})|+o(\overline{\widetilde{F}})\leq2n-2$ and we can apply the corollary for $n-1$.
This concludes the proof of Theorem~\ref{bsthm}.

Finally, we consider $BS(1,p)$ with the standard generating set.
\begin{ex}\label{exbsp}
There exist natural numbers $p$ and $n$ and a set $F\subset BS(1,p)$ such that $|F|\leq|F_n|$ (where $F_n=\{(k,f):k\in[\![0,n-1]\!],f\in\Z,0\leq f<p^n\}$) and

$$\frac{|\partial_{edge}F|}{|F|}<\frac{|\partial_{edge}F_n|}{|F_n|}\leq1.$$
In particular, this is true for $n=3$ and $p$ divisible by $3$ with $36\leq p\leq60$.
\end{ex}

\begin{proof}
The standard F{\o}lner set $F_n$ has $np^n$ elements.
We calculate $|\partial_{edge}^aF_n|=2p^n$ and $|\partial_{edge}^bF_n|=2\frac{p^n-1}{p-1}$.
Then
$$|\partial_{edge}F_n|=2p^n\frac{p-\frac{1}{p^n}}{p-1}\leq2p^n\frac{p}{p-1},$$
and $\frac{|\partial_{edge}F_n|}{|F_n|}\leq\frac{2}{n}\frac{p}{p-1}$.
Therefore for $n=3$ and $p\geq3$ we have $\frac{|\partial_{edge}F_3|}{|F_3|}\leq1$.
Assume that $p$ is divisible by $3$ and let
$$F=\{(k,f):k\in[\![0,3]\!],f=\sum_{i=0}^3\varepsilon_ip^i\mbox{ with }0\leq\varepsilon_i<\frac{p}{3}\}.$$
Then $|F|=4(\frac{p}{3})^4=3p^3\frac{4p}{243}$ and for $p\leq60$ we obtain $|F|\leq|F_3|$.
Furthermore, $|\partial_{edge}^aF|=2(\frac{p}{3})^4$ and $|\partial_{edge}^bF|=8(\frac{p}{3})^3$.
Thus
$$|\partial_{edge}F|=4\left(\frac{p}{3}\right)^4\left(\frac{1}{2}+\frac{6}{p}\right),$$
and for $p\geq36$ we have:
$$\frac{|\partial_{edge}F|}{|F|}=\frac{1}{2}+\frac{6}{p}\leq\frac{2}{3}<\frac{2}{3}\frac{p-\frac{1}{p^3}}{p-1}=\frac{|\partial_{edge}F_3|}{|F_3|}.$$
\end{proof}
Therefore the result of Theorem~\ref{bsthm} is not true for $BS(1,p)$ for all $p$.

\bibliographystyle{plainurl}

\end{document}